\documentclass[12pt,reqno]{amsart}
\usepackage{amscd,amsmath,amsthm,amssymb}
\usepackage{url}
\usepackage{tikz}
\usetikzlibrary{knots,calc,shapes.geometric}
\usepackage{amsfonts,amsmath,mathtools}
\usepackage{graphics}
\usepackage{subfig}
\usepackage{xcolor}
\usepackage{latexsym}
\usepackage{graphics}
\usepackage{float}
\usepackage{enumitem}

\newtheorem{Theorem}{Theorem}[section]
\newtheorem{Lemma}[Theorem]{Lemma}
\newtheorem{Corollary}[Theorem]{Corollary}
\newtheorem{Proposition}[Theorem]{Proposition}
\newtheorem{Remark}[Theorem]{Remark}

\newtheorem{Example}[Theorem]{Example}

\newtheorem{Definition}[Theorem]{Definition}
\newtheorem{Setup}[Theorem]{Setup}

\newtheorem{Open question}[Theorem]{Open question}

%
\textwidth=15cm \textheight=22.2cm \topmargin=0.4cm
\oddsidemargin=0.5cm \evensidemargin=0.5cm \pagestyle{plain}
%
%
\def\qed{\ifhmode\textqed\fi
	\ifmmode\ifinner\hfill\quad\qedsymbol\else\dispqed\fi\fi}
\def\textqed{\unskip\nobreak\penalty50
	\hskip2em\hbox{}\nobreak\hfill\qedsymbol
	\parfillskip=0pt \finalhyphendemerits=0}
\def\dispqed{\rlap{\qquad\qedsymbol}}

%

\let\epsilon\varepsilon

\def\B{{\mathcal B}}
\def\K{{\mathcal K}}
\def\T{{\mathcal T}}
\def\X{{\mathcal X}}
\def\Y{{\mathcal Y}}
\def\W{{\mathcal W}}
\def\Z{{\mathcal Z}}
\def\PG{\textup{PG}}
\def\AG{\textup{AG}}

\begin{document}
\title{Generalizing blocking semiovals in finite projective planes}
\author{Marilena Crupi, Antonino Ficarra}

\address{Marilena Crupi, Department of mathematics and computer sciences, physics and earth sciences, University of Messina, Viale Ferdinando Stagno d'Alcontres 31, 98166 Messina, Italy}
\email{mcrupi@unime.it}

\address{Antonino Ficarra, BCAM -- Basque Center for Applied Mathematics, Mazarredo 14, 48009 Bilbao, Basque Country -- Spain}
\address{Ikerbasque, Basque Foundation for Science, Plaza Euskadi 5, 48009 Bilbao, Basque Country -- Spain}
\email{aficarra@bcamath.org,\,\,\,\,\,\,\,\,\,\,\,\,\,antficarra@unime.it}

\subjclass[2020]{51E20, 51E21}

\keywords{Projective planes, blocking sets, ovals, blocking sets with the $r_\infty$-property}


\begin{abstract}
Blocking semiovals and the determination of their (minimum) sizes constitute one of the central research topics in finite projective geometry. In this article we introduce the concept of blocking set with the $r_\infty$-property in a finite projective plane $\PG(2,q)$, with $r_\infty$ a line of $\PG(2,q)$ and $q$ a prime power. This notion greatly generalizes that of blocking semioval. We address the question of determining those integers $k$ for which there exists a blocking set of size $k$ with the $r_\infty$-property. To solve this problem, we build new theory which deeply analyzes the interplay between blocking sets in finite projective and affine planes.
\end{abstract}
\maketitle

\section{Introduction}
Blocking sets are a fundamental concept in combinatorial geometry and finite geometry, and they play a critical role in various areas of mathematics, including design theory, coding theory, and the study of finite projective and affine planes. Their intriguing properties and wide range of applications make blocking sets a rich subject of study in both theoretical and applied mathematics.

Indeed, blocking sets can be seen as a bridge between theoretical combinatorial concepts and practical applications in various fields of mathematics and engineering. Their study not only enhances our understanding of finite geometries but also contributes to advancements in technology and science through their applications in coding theory, design theory, and beyond.

Roughly speaking a \emph{blocking set} in a finite projective plane is defined as a set of points such that every line in the plane intersects the set in at least one point. More formally, consider a projective plane $\PG(2, q)$ of order $q$, where $q$ is a prime power. A subset $\B$ of the point set of $\PG(2, q)$  is called a blocking set if every line in $\PG(2, q)$ contains at least one point from $\B$ and does not contain any line. 

Such a definition can also be introduced in the context of the finite affine plane $\AG(2, q)$ where $q$ is a prime power.

To visualize this, imagine a finite projective plane where lines and points follow specific incidence properties. A blocking set ensures that no matter which line one chooses, there will always be at least one point from the blocking set on that line. This simple yet powerful property underpins the utility of blocking sets in various applications.

A crucial question in the study of blocking sets concerns determining their minimum size. For a projective plane of order $q$, it is known that any blocking set contains at least $q + \sqrt{q} + 1$ points  \cite{ABruen71}. This bound is significant because it represents the smallest number of points needed to ensure the blocking property. Understanding these minimal blocking sets is essential because they provide insight into the most efficient ways to achieve the blocking condition. A minimal blocking set has the property that it does not contain any proper subset which is also a blocking set.

In a projective plane, a \emph{semioval} is a set of points $\B$ such that there is a unique tangent
line, that is, a line with one point of contact, at each point. A \emph{blocking semioval} is a set of points in a projective plane that is both a blocking set and a semioval. Hubaut \cite{XH1970} has shown that a semioval contains at least $q + 1$ points. For a blocking semioval $\B$ we have $q+ \sqrt{q} + 1 \le \vert \B\vert\le q\sqrt{q}+1$ \cite{ABruen71, XH1970} (see also \cite{JMD00}).

The notion of semiovals has been around since the 1970's (\cite{FB73, JT74, GK}), but the study of semiovals has been motivated by the pioneering paper of Batten \cite{Bt00} and initiated by Dover in \cite{JMD00, JMD002}. While the study of blocking semiovals was originally motivated by Batten \cite{Bt00} in connection with cryptography, other authors studied these objects, because they are interesting in their own right. 

A classical example of blocking semioval is given by the vertexless triangle in a finite projective plane of order $q>2$. If $a, b, c$ are any three non concurrent lines in the plane, the vertexless triangle $\T$ is defined as the set of all points which lie on exactly one of these lines, that is, the set of points on the sides of this triangle without the vertices. $\T$ is a blocking semioval of size $3q-3$.

The problem of determining blocking semiovals of a given size is open. Moreover, the question of what the true lower bound on the size of a blocking semioval remains to be answered, since the upper bound $3(q-1)$ is reached by any vertexless triangle. Furthermore, given $q$ and $k$, determining whether there exists a blocking semioval of size $k$ in $\PG(2,q)$ is a tremendous tasks. For instance see \cite{BFKMP14,Bt00,BD01,BP18,BCF24,BS78,ABruen71,JMD00,JMD002,JM12,JMDMW16,JMD23,DMW13,XH1970,Jamison,GK,GK08,PP02,PS02,CS00,CS002,JT74} and the references therein.

Let $\PG(2, q)$ be a projective plane. In the present paper, inspired by the notion of blocking semioval, we introduce the concept of \textit{blocking set with the $r_{\infty}$-property}. A blocking set $\B$ in $\PG(2, q)$ is said to have the $r_{\infty}$-property with respect to a point $P\in \B$ if there exists through $P$ only one tangent to $\B$, called $r_{\infty}$, and the other lines through $P$ are secants to $\B$ (Definition \ref{Def:rproperty}).

A blocking semioval $\B$ satisfies the $r_{\infty}$-property with respect to every point of $\B$. Hence, the family of the blocking semiovals is a subset of the family of blocking sets with the $r_{\infty}$-property. In particular, any vertexless triangle has the $r_{\infty}$-property.

The paper is organized as follows. 

In Section \ref{secPG:2}, we collect some basic results on blocking sets and blocking semiovals.

Section \ref{secPG:3} introduces the concept of blocking set $\mathcal{B}$ with the $r_\infty$-property with respect to a point $P\in\mathcal{B}$. As we remarked before any blocking semioval, and so any vertexless triangle, is a blocking set $\mathcal{B}$ with the $r_\infty$-property with respect to any point $P\in\mathcal{B}$. We first examine the existence of these objects in the basic but motivating example $\PG(2,3)$. It is observed in Corollary \ref{cor:PG(2, 3)} that any minimal blocking set in $\PG(2,3)$ is a vertexless triangle and satisfies the $r_\infty$-property.

Next, we consider the existence of blocking sets with the $r_\infty$-property in a projective plane $\PG(2,q)$ for $q\ge4$ a prime power. As in the case of blocking semiovals, the main question we want to address, is to determine all the possible sizes $k$ of a blocking set with the $r_\infty$-property in $\PG(2,q)$. By Remark \ref{Rem:vertexlesstriangle}, any vertexless triangle is a blocking set with the $r_\infty$-property having size $k=3(q-1)$. Therefore, we address the case $k\le 3q-4$. For this investigation, we distinguish the three possible cases: (a) $2q\le k\le 3q-5$, (b) $1\le k\le 2q-1$ and (c) $k=3q-4$. 

For the case (a), we consider some special constructions due to Innamorati and Maturo \cite{IM91}. Following the presentation given in \cite[Theorem 13.15]{JH98}, in Setup \ref{setup} we describe some special blocking sets $\mathcal{B}_k$ in $\PG(2,q)$ of size $2q\le k\le 3q-5$, and we say that each $\B_k$ is built with a \textit{$k$-construction}. In Lemma \ref{lem1}, we prove that $\B_k$ is a blocking set with the $r_\infty$-property which is not a semioval, for any $2q\le k\le 3q-5$. On the other hand, we prove in Lemma \ref{lem2} that a blocking set $\B$ built with a $(2q-1)$-construction does not satisfy the $r_\infty$-infinity property for any point $P\in\B$. Combining these two lemmas we show in Theorem \ref{thm: 5} that a blocking set with the $r_\infty$-property of size $k$ exists for any $2q\le k\le3(q-1)$ with $q\ge5$.

Surprisingly, in case (b), no blocking set of size $k$ satisfies the $r_\infty$-property as shown in Theorem \ref{thm:no} of Section \ref{secPG:4}. To prove this, setting $\ell = r_{\infty}$, and defining the affine plane $\AG(2, q)$ as $\PG(2, q)\setminus \ell$, we introduce in Definition \ref{def:prodproperty} the concept of blocking set $\B$ in $\AG(2, q)$ with the \textit{$\Pi$-property}. In Proposition \ref{propo:alpha}, we establish a bijection between the set $\X_{P, \ell}$ of all minimal blocking sets in $\PG(2, q)$ with the $r_{\infty}$-property with respect to $P\in r_\infty= \ell$, and the set $\Y_{\Pi_P}$ of all minimal blocking sets in $\AG(2, q)$ with the $\Pi$-property with respect to some direction $\Pi_P$. Using this result and the key inequality $|\B|\ge 2q-1$, stated in Lemma \ref{lem:[7]}, known as the Jamison, Brouwer-Schrijver Theorem \cite{Jamison, BS78}, valid for any minimal blocking set $\B$ in $\AG(2,q)$, we prove Theorem \ref{thm:no}.  

In the last section, we discuss the case (c). If $q\ge5$, it follows from Theorem \ref{thm:size}(b), see also \cite{CS002}, that blocking sets with the $r_\infty$-property having size $3q-4$ exist. Moreover, for $q=4$, we give an example which shows that there exists a blocking set with the $r_\infty$-property of size $3q - 4=8$ in $\PG(2,q)$. In general, it is an open question to determine those integers $2q+2\le k\le 3q-5$ for which blocking semiovals of size $k$ exist. For this problem, if $2q\le k\le 3q-5$, by Lemma \ref{lem1} one has to consider blocking sets with the $r_\infty$-property which are not built with a $k$-construction. For $k>3(q-1)$ we do not know whether there exist blocking sets with the $r_\infty$-property in $\PG(2,q)$, $q>5$, having size $k$. 

\section{Generalities on blocking sets}\label{secPG:2}
In this section for the reader’s convenience we collect some notions and results we need for the development of the article.

Let $\PG(2, q)$ be the classical projective plane of order $q$ for $q$ a prime power. It is well-known that $\PG(2, q)$ has $q^2+q+1$ points, $q^2+q+1$ lines, each line passes through $q+1$ points, and each pair of distinct points lies on exactly one line.

We fix some notation. In $\PG(2, q)$, a line will be indicated with $r$ or $AB$, if $A$ and $B$ are points of $r$, and $P=\{P\}$ if $\{P\}$ is a singleton. 

Given a projective plane $\PG(2, q)$ and a line $r_{\infty}$ of $\PG(2, q)$, we define the affine plane 
$\AG(2, q)= \PG(2, q)\setminus r_{\infty}$ as follows:
\begin{enumerate}
\item[-] the points of $\AG(2, q)$ are the points of $\PG(2, q)$ that are not in $r_{\infty}$,
\item[-] the lines of $\AG(2, q)$ are the lines of $\PG(2, q)$, except $r_{\infty}$,
\item[-] $P\in\ell$ in $\AG(2, q)$ if and only if $P\in\ell$ in $\PG(2, q)$.
\end{enumerate}

Let $\K$ be a subset of $\PG(2, q)$. A \emph{tangent} to $\K$ is a line which intersects $\K$ in only one point. A \emph{secant} to $\K$ is a line which intersects $\K$ in more than one point. We note that the term tangent is used only to denote one point contact; these lines may not be tangents in the algebraic geometry sense.

We quote the next definitions from \cite[Chapter 13]{JH98}. 
\begin{Definition}\label{def1}
A blocking set of $\PG(2,q)$ is a set of points $\B$ which meets every line but does not contain any line. A blocking set $\B$ in $\AG(2,q)$ is a set of points which meets every line of $\AG(2,q)$.

A  blocking set $\B$ of $\PG(2,q)$ $(\AG(2,q)$, respectively\textup{)} is called minimal if no proper subset of $\B$ is a blocking set of $\PG(2,q)$ $(\AG(2,q)$, respectively\textup{)}.
\end{Definition}

Note that in the affine case $\B$ may contain some line of $\AG(2,q)$.

Geometrically, by \cite[Lemma 13.1]{JH98} a blocking set $\B$ in $\PG(2,q)$ is minimal if and only if, for every point $P$ of $\B$, there exists at least a tangent $\ell$ to $\B$ passing through $P$, that is some line $\ell$ such that $\B\cap\ell=P$.

\begin{Definition}\label{def2}
A semioval is a set $\K$ of points of $\PG(2, q)$ such that for every $P\in \K$ there exists a unique line $\ell$ of  $\PG(2, q)$ such that $\ell \cap \K=P$.
\end{Definition}

Combining the Definitions \ref{def1} and \ref{def2}, we obtain the concept of \emph{blocking semioval}, that is, a set of points in $\PG(2, q)$ which is both a semioval and blocking set.

One can observe that blocking semiovals are necessarily minimal blocking sets, as deleting a point of a blocking semioval $\K$ will cause that the tangent to $\K$ at that point is unblocked. On the other hand, a blocking semioval is also a maximal semioval. Indeed, adding any point to a blocking semioval $\K$ will cause the added point to have no tangent, as every line through that point must already meet $\K$.

\begin{Remark} \em By \cite[Corollary 13.3]{JH98}, a blocking set exists in $\PG(2,q)$ if and only if $q>2$. Hence, hereafter we always tacitly assume that $q\ge3$.
\end{Remark}

In any finite projective plane of order $q>2$, let $a, b, c$ be any three non concurrent lines. Let $\T$ be the set of all points which lie on exactly one of these lines, that is, the set of points on the sides of this triangle without the vertices. Then $\T$ establishes the existence of blocking semiovals of size $3(q-1)$ in all finite projective planes (see, for instance \cite{JMD00}) except for the \emph{Fano plane}, which does not contain blocking sets.

We close the section with some results from \cite{JMD00, JMD002, CS002}.

\begin{Theorem} \label{thm:size} Let $k$ be the size of a blocking semioval  of the projective plane $\PG(2, q)$. Then
\begin{enumerate}
\item[\em(a)] $2q+2\le k\le 3(q-1)$, if $q>5$ and the blocking semioval has the property $x_{q-1}\neq 0$, where $x_{q-1}$ denotes the number of lines of $\PG(2, q)$ which meets the blocking semioval in exactly $q-1$ points;
\item[\em(b)] there exist blocking semiovals of size $k=3q-4$, for any $q\ge5$; 
\item[\em(c)] there exist blocking semiovals of size $k=3p^e-p-2$, $p^e=q$, where $p$ is a prime number, $p\ge 3$ and $e\ge 2$.
\end{enumerate}
\end{Theorem}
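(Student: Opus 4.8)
The plan is to treat (a) as the genuine combinatorial statement to be proved from scratch, and (b)--(c) as existence results established by explicit construction. For (a), let $\B$ be a blocking semioval and let $\ell$ be a line with $|\ell\cap\B|=q-1$, which exists precisely because $x_{q-1}\neq0$. Write $\{A,B\}=\ell\setminus\B$ for the two points of $\ell$ lying outside $\B$. The heart of the argument is to locate the tangents at the points off $\ell$. Fixing $Q\in\B\setminus\ell$, each of the $q-1$ lines joining $Q$ to a point of $\ell\cap\B$ is a secant, so among the $q+1$ lines through $Q$ only $QA$ and $QB$ can serve as the unique tangent at $Q$. Hence every point of $\B\setminus\ell$ has its tangent through $A$ or through $B$; since the tangent at $Q$ contains $Q\notin\ell$ it cannot coincide with $\ell=AB$, so it passes through exactly one of $A,B$. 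This produces a partition $\B\setminus\ell=\B_A\sqcup\B_B$, where $\B_A$ (resp. $\B_B$) collects the points whose tangent passes through $A$ (resp. $B$). Put $a=|\B_A|$, $b=|\B_B|$, so that $|\B|=(q-1)+a+b$.

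The second step counts the lines through $A$. These $q+1$ lines split into $\ell$, the $a$ tangents at the points of $\B_A$, and $q-a$ remaining lines. Since $\B$ is a blocking set and none of these last lines is a tangent, each is a secant meeting $\B$ only in $\B\setminus\ell$; in fact only in $\B_B$, because a point of $\B_A$ lies on its tangent through $A$ and on no secant through $A$. These $q-a$ secants therefore partition $\B_B$, each carrying at least two of its points, whence $b\ge 2(q-a)$, i.e. $2a+b\ge 2q$. The symmetric count at $B$ gives $a+2b\ge 2q$.

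The third step extracts both bounds. If $b=0$ then $a+2b\ge 2q$ forces $a\ge 2q$, which is impossible since at most $q$ of the lines through $A$ can be tangent ($\ell$ being a secant); hence $b\ge1$, and symmetrically $a\ge1$. Then through $A$ there is at least one non-$\ell$ secant, leaving at most $q-1$ tangents, so $a\le q-1$, and likewise $b\le q-1$; therefore $|\B|\le(q-1)+2(q-1)=3(q-1)$. For the lower bound, suppose $a+b\le q+2$. From $2a+b\ge2q$ we get $a\ge 2q-(a+b)\ge q-2$, and symmetrically $b\ge q-2$, so $a+b\ge 2q-4$. Together with $a+b\le q+2$ this forces $q\le6$, contradicting $q>5$ (so $q\ge7$). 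Hence $a+b\ge q+3$ and $|\B|\ge 2q+2$, which completes (a).

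For (b) and (c) the task is existence, so I would exhibit explicit point sets in coordinates. For (b) I would begin from a vertexless triangle of size $3(q-1)$ and apply a controlled local modification near one vertex that lowers the size to $3q-4$ while keeping a single tangent at each point; for (c) I would use the algebraic construction of \cite{CS002}, giving the point set and its tangent lines by an explicit coordinate description. In both cases the verification is the delicate part: line type by line type one checks that every line meets the set (blocking) and that exactly one tangent passes through each point (semioval), typically via the standard identities $\sum_i x_i=q^2+q+1$, $\sum_i i\,x_i=k(q+1)$, $\sum_i\binom{i}{2}x_i=\binom{k}{2}$ combined with the explicit incidences. The main obstacle is thus not (a) --- which is self-contained once the partition $\B\setminus\ell=\B_A\sqcup\B_B$ is discovered --- but the casework verifying the semioval property of the explicit families in (b) and (c), where one must rule out a second tangent at every point.
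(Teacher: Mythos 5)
The paper does not actually prove this theorem: it is imported from the literature, with (a) cited from \cite{JMD00,JMD002} and (b), (c) from \cite{CS002}, so the comparison is really between your argument and those sources. Your proof of (a) is correct and self-contained, and it is essentially Dover's own double count: the partition $\B\setminus\ell=\B_A\sqcup\B_B$ according to which of the two holes $A,B$ of the $(q-1)$-secant carries the tangent, the observation that the $q-a$ non-tangent lines through $A$ other than $\ell$ are secants meeting $\B$ only in $\B_B$, and the resulting inequalities $2a+b\ge 2q$ and $a+2b\ge 2q$, from which $a,b\le q-1$ gives the upper bound and the contradiction for $q\ge 7$ gives the lower one. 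I checked the details (distinctness of the $a$ tangents through $A$, the exclusion of $\B_A$ from the remaining secants through $A$, and the endgame) and they hold. One caveat: your lower-bound argument genuinely needs the $(q-1)$-secant, whereas Dover's Theorem 3.3 in \cite{JMD00} obtains $k\ge 2q+2$ for every blocking semioval with $q>5$; since part (a) as stated here carries the hypothesis $x_{q-1}\neq0$, this does not affect correctness, but your argument proves only the conditional version.

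The genuine gap is in (b) and (c), where nothing is proved: you announce an intention to exhibit explicit point sets but exhibit none, and the verification you yourself flag as ``the delicate part'' is never carried out. For (b) in particular, the proposed strategy --- start from a vertexless triangle and apply ``a controlled local modification near one vertex that lowers the size to $3q-4$ while keeping a single tangent at each point'' --- is exactly where the difficulty sits: the natural size-reducing modifications of the vertexless triangle (the Innamorati--Maturo $k$-constructions of Setup \ref{setup}) are shown in Lemma \ref{lem1} to produce points with two tangents, hence to destroy the semioval property. The existence of a modification that preserves that property is the entire content of (b), and it requires the specific constructions of Dover and Suetake. As written, your (b) and (c) rest on the citations \cite{JMD002,CS002} exactly as the paper's own proofless statement does; only (a) has genuinely been established.
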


\begin{Remark} \em From \cite[Theorem 3.3]{JMD00}, if $q>5$, then $2q+2$ is the lower bound for the size $k$ of any blocking semioval of the projective plane $\PG(2, q)$. 
\end{Remark}

\section{Blocking sets with the $r_{\infty}$-property in the projective planes}\label{secPG:3}

In this section we consider blocking sets that have a special property. We relax the condition in the definition of blocking semioval as follows.
\begin{Definition} \label{Def:rproperty} Let $\B$ be a blocking set in $\PG(2, q)$. We say that $\B$ has the $r_{\infty}$-property with respect to $P\in \B$ if there exists through $P$ only one tangent to $\B$, called $r_{\infty}$, and the other lines through $P$ are secants to $\B$.
\end{Definition}

If it is not necessary to specify the point $P$, we say that $\B$ has the $r_{\infty}$-property. 

\begin{Remark}\label{Rem:blocksemiovals}
	\rm Notice that the blocking semiovals $\B$ satisfy the $r_{\infty}$-property with respect to every point of $\B$. Hence, the family of the blocking semiovals is a subset of minimal blocking sets with the $r_{\infty}$-property.
\end{Remark}

The next remark will be crucial.
\begin{Remark}\label{Rem:vertexlesstriangle}
	\rm By \cite{JMD00} it is known that for any $q>2$, the vertexless triangle of size $3(q-1)$ is a blocking semioval of $\PG(2,q)$, and thus has the $r_{\infty}$-property.
\end{Remark}

In order to discuss the blocking sets in $\PG(2, 3)$ which satisfy the $r_{\infty}$-property, we quote the next definition from \cite[Page 335]{JH98}.

\begin{Definition} A projective triangle of side $n$ in $\PG(2, q)$ is a set $\B$ of $3(n-1)$ points such that:
\begin{enumerate}
\item[\em(i)] on each side of a triangle $P_0P_1P_2$ there are $n$ points of $\B$;
\item[\em(ii)] the vertices $P_0$, $P_1$, $P_2$ are in $\B$;
\item[\em(iii)] if $Q_0$ on $P_1P_2$ and $Q_1$ on $P_2P_0$ are in $\B$ then so is $Q_2=Q_0Q_1\cap P_0P_1$.
\end{enumerate}
\end{Definition}

It is known that all minimal blocking set of the projective plane $\PG(2, 3)$ have size $6$ and that they are projective triangles of side $3$ \cite[Theorem 13.21]{JH98}. With the next theorem we prove that every projective triangle of side $3$ in the projective plane $\PG(2, 3)$ is a vertexless triangle and viceversa.

\begin{Theorem} \label{thm:PG(2, 3)} In $\PG(2, 3)$ the set of projective triangles of side $3$ is the set of vertexless triangles.
\end{Theorem}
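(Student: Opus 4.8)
The plan is to establish the two inclusions separately, noting first that both families consist of sets of $3(3-1)=6$ points. For the inclusion of vertexless triangles among projective triangles of side $3$, I would simply combine the results already quoted: by Remark \ref{Rem:vertexlesstriangle} a vertexless triangle is a blocking semioval and hence a minimal blocking set, while by \cite[Theorem 13.21]{JH98} every minimal blocking set of $\PG(2,3)$ is a projective triangle of side $3$. This disposes of one direction without computation.

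For the reverse inclusion I would proceed constructively. Let $\B$ be a projective triangle of side $3$ with associated triangle $P_0P_1P_2$. Both properties in the statement are invariant under $\textup{PGL}(3,3)$, which acts transitively on ordered triples of non-collinear points, so I may normalize $P_0=[1:0:0]$, $P_1=[0:1:0]$, $P_2=[0:0:1]$. Each side is a line with $4$ points meeting $\B$ in exactly $3$ of them, namely the two vertices on that side together with one further point; thus I can write these extra points as $Q_0=[0:1:t_0]$ on $P_1P_2$, $Q_1=[1:0:t_1]$ on $P_2P_0$ and $Q_2=[1:t_2:0]$ on $P_0P_1$, with $t_0,t_1,t_2\in\{1,2\}$. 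A short synthetic argument using condition (iii) shows that $Q_0,Q_1,Q_2$ are collinear: the point $Q_0Q_1\cap P_0P_1$ lies in $\B$, and it cannot be $P_0$ or $P_1$, since either would force one of $Q_0,Q_1$ to coincide with $P_0P_2\cap P_1P_2=P_2$, hence it equals $Q_2$. In coordinates this collinearity is precisely the relation $t_1+t_0t_2=0$.

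The heart of the argument is then to produce three non-concurrent lines whose vertexless triangle is $\B$, and the natural choice is the triple of cevians $a=P_0Q_0$, $b=P_1Q_1$, $c=P_2Q_2$, which compute to $a\colon z=t_0y$, $b\colon z=t_1x$ and $c\colon y=t_2x$. Eliminating coordinates, a common point of $a,b,c$ would force $t_1=t_0t_2$; but the collinearity relation gives $t_1=-t_0t_2$, and $-t_0t_2\ne t_0t_2$ since $2t_0t_2\ne 0$ in $\mathbb{F}_3$. Hence $a,b,c$ are non-concurrent, and I expect this to be the main obstacle, its real content being that condition (iii) is exactly what prevents the three cevians from being concurrent. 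To conclude, I would verify that each of the six points $P_0,P_1,P_2,Q_0,Q_1,Q_2$ lies on exactly one of $a,b,c$; since the vertexless triangle of three non-concurrent lines has exactly $3(q-1)=6$ points, these six points constitute precisely that vertexless triangle. Therefore $\B$ is a vertexless triangle, and the two families coincide.
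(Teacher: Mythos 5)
Your proof is correct and rests on the same central construction as the paper's: the three cevians joining each vertex of the projective triangle to the extra point of $\B$ on the opposite side are non-concurrent, and their vertexless triangle is exactly $\B$. You carry this out in coordinates and explicitly verify the non-concurrency via the relation $t_1=-t_0t_2$ forced by condition (iii) (a detail the paper's synthetic proof leaves implicit), and you dispatch the reverse inclusion by citing the classification of minimal blocking sets in $\PG(2,3)$ rather than by a symmetric direct argument, but these are differences of presentation rather than of substance.
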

\begin{proof} Keeping in mind that in a projective plane of order $3$ the lines have four points, we consider a projective triangles of side $3$ with vertices $A, B, C$ and sides $a=(A_1BCA_2)$, $b=(AB_1CB_2)$, $c=(ABC_1C_2)$, with $A_1, B_1, C_1$ on the same line $\ell$. Then $\B =\{A, B, C, A_1, B_1, C_1\}$ is a minimal blocking set. Now the lines $a'=(AA_1KH)$, $b'=(BB_1KT)$, $c'=(CC_1HT)$ prove that $\B$ is the vertexless triangle with vertices $H, K, T$ and sides $a', b', c'$. The contrary is similar.
\end{proof}

\begin{Corollary} \label{cor:PG(2, 3)} In $\PG(2, 3)$ every minimal blocking set is a vertexless triangle and so it has the $r_{\infty}$-property.
\end{Corollary}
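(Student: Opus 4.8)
The plan is to chain together two facts that are already in hand: first, the structural result \cite[Theorem 13.21]{JH98} that every minimal blocking set in $\PG(2,3)$ has size $6$ and is a projective triangle of side $3$; and second, Theorem \ref{thm:PG(2, 3)}, which I have just proved, identifying the projective triangles of side $3$ with the vertexless triangles. Together these immediately give that every minimal blocking set in $\PG(2,3)$ is a vertexless triangle.

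To finish, I would invoke Remark \ref{Rem:vertexlesstriangle}, which records that for any $q>2$ — and in particular for $q=3$ — a vertexless triangle of size $3(q-1)$ is a blocking semioval and therefore has the $r_\infty$-property. Since every minimal blocking set in $\PG(2,3)$ has been shown to be a vertexless triangle, it inherits the $r_\infty$-property.

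Thus the corollary is a two-line deduction. Explicitly: let $\B$ be any minimal blocking set in $\PG(2,3)$. By \cite[Theorem 13.21]{JH98} it is a projective triangle of side $3$, hence by Theorem \ref{thm:PG(2, 3)} it is a vertexless triangle, and by Remark \ref{Rem:vertexlesstriangle} it has the $r_\infty$-property.

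I do not anticipate any genuine obstacle here, since all the work has been front-loaded into Theorem \ref{thm:PG(2, 3)} and the cited classification theorem. The only point to be careful about is making sure the hypotheses of Remark \ref{Rem:vertexlesstriangle} genuinely cover the case $q=3$ (they do, as that remark is stated for all $q>2$), so that no separate verification of the $r_\infty$-property for the specific size $3(q-1)=6$ is needed.
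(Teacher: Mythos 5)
Your proposal is correct and follows exactly the paper's own argument: the paper's proof likewise cites \cite[Theorem 13.21]{JH98} together with Theorem \ref{thm:PG(2, 3)}, with the appeal to Remark \ref{Rem:vertexlesstriangle} for the $r_\infty$-property left implicit. Your explicit mention of that remark is a harmless (and slightly more complete) addition.
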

\begin{proof}
	The result follows from \cite[Theorem 13.21]{JH98} and Theorem \ref{thm:PG(2, 3)}.
\end{proof}
 
Now we consider the case $q\ge 4$. For our aim, we analyze some special minimal blocking sets in a projective plane of order $q\ge 4$, whose construction is due to Innamorati and Maturo \cite{IM91}. Their result is also described in \cite[Theorem 13.15]{JH98}, whose presentation we follow here. We will prove that these minimal blocking sets verify the $r_{\infty}$-property, but they are not blocking semiovals.\medskip

\begin{Setup} \label{setup} \em Let $q\ge 4$. Any construction of minimal blocking sets of size $k$ which follows is called \emph{$k$-construction} with $2q-1\le k\le 3q-5$, and we say that \emph{the blocking set is built with a $k$-construction}.

Let $\T$ be a vertexless triangle determined by the non concurrent lines $a, b, c$. Label the line intersections as follows: $a\cap b=C$, $a\cap c= B$ and $b\cap c=A$. Then the vertexless triangle $\T$ consists of the $3q-3$ points on the lines $a, b$ and $c$ less the points $A, B$ and $C$.

Choose a line $\ell$ through $A$ not a side of $\T$ and a point $D_1$ on $\ell$ but not on a side of $\T$. Let $B_1=BD_1\cap b$ and $C_1=CD_1\cap c$. Define $A'= \ell \cap a$.
\begin{enumerate}
\item[-] When $q$ is odd, let $D_2=(A'B_1\cap c)C\cap \ell$. Moreover, define $B_i=BD_i\cap b$ and $C_i=CD_i\cap c$, for $i=1, \ldots, n$. In particular, $C_2=A'B_1\cap c$. Hence, we choose $D_3, \ldots, D_n$ distinct of $\ell \setminus \T$ with $n\le q-2$.
\item[-] When $q$ is even, use the same construction for $B_i$ and $C_i$, and choose $D_2$ on $\ell$ distinct as well from the other $D_j$.
\end{enumerate}
\vspace*{-0.9em}
\begin{figure}[H]
	\begin{tikzpicture}
		\draw[-] (13,0) -- (0,0) -- (8,5) -- (6,3.75) -- (13,0) -- (4.8,3) -- (4.8,0) -- (8,5) -- (6,3.75) -- (4.8,0);
		\draw[-] (0,0) -- (6.893,3.27);
		\draw[-] (2.95,0) -- (6,3.75);
		\node[fill,draw,circle,blue,label=below:$A$,minimum size=5pt, inner sep=0pt] at (0,0) {};
		\node[fill,draw,circle,blue,label=below:$C_2$,minimum size=5pt, inner sep=0pt] at (13,0) {};
		\node[fill,draw,circle,blue,label=above:$B_2$,minimum size=5pt, inner sep=0pt] at (8,5) {};
		\node[fill,draw,circle,blue,label=above:$C$,minimum size=5pt, inner sep=0pt] at (6,3.75) {};
		\node[fill,draw,circle,blue,label=above:$B_1$,minimum size=5pt, inner sep=0pt] at (4.8,3) {};
		\node[fill,draw,circle,blue,label=below:$B$,minimum size=5pt, inner sep=0pt] at (4.8,0) {};
		\filldraw[color=blue] (6.893,3.27) circle (2.5pt) node[right,xshift=1,yshift=2]{\textcolor{black}{$D_2$}};
		\node[fill,draw,circle,blue,label=below:$\,\,A'$,minimum size=5pt, inner sep=0pt] at (5.66,2.68) {};
		\filldraw[color=blue] (4.8,2.275) circle (2.5pt) node[left,yshift=3.2]{\textcolor{black}{$D_1$}};
		\node[fill,draw,circle,blue,label=below:$C_1$,minimum size=5pt, inner sep=0pt] at (2.95,0) {};
		\filldraw (6.5,0) node[below]{$c$};
		\filldraw (5.5,1.8) node[below]{$a$};
		\filldraw (2.4,1.47) node[above]{$b$};
		\filldraw (2.4,1.18) node[below]{$\ell$};
	\end{tikzpicture}
\end{figure}

Then, in \cite[Theorem 13.15]{JH98} is proved that, for all $n=2,\ldots,q-2$, the set
\[\B=(\T\cup\{D_i, i=1, \ldots, n\})\setminus\{B_i, C_i, i=1, \ldots, n\}\]
is a minimal blocking set of size 
\begin{equation} \label{eq:size}
k=3(q-1)+n-2n=3q-3-n.
\end{equation}
\end{Setup}

Note that for $n=2$ the size of $\B$ is $3q-5$ and for $n=q-2$ the size of $\B$ is $2q-1$, that is, $2q-1\le \vert \B\vert \le 3q-5$.

\begin{Lemma} \label{lem1} Let $\B$ be a minimal blocking set in $\PG(2, q)$, $q\ge 5$, built with a $k$-construction with $2q\le k\le 3q-5$, then $\B$ has the $r_{\infty}$-property with respect to a point $P\in \B$, and it is not a blocking semioval for every $k$.
\end{Lemma}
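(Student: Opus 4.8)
The plan is to locate a single good point $P$ realizing the $r_\infty$-property and, separately, to exhibit a point carrying two tangents so as to rule out the semioval property. The key idea for the $r_\infty$-property is to take $P$ on the side $b$ (by symmetry $c$ would serve equally well) rather than on $a$: the deleted points $B_i,C_i$ all lie on $b$ and $c$, while no point of $a$ except the vertices is ever removed, so $a$ can be used as a clean reference pencil. Concretely, I would fix any $P\in b\cap\B$; such a point exists because $|b\cap\B|=q-1-n$, and the hypothesis $k\ge 2q$ forces $n\le q-3$, whence $|b\cap\B|\ge 2$.

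First I would show that the line $PB$ through $P$ and the opposite vertex $B=a\cap c$ is a tangent to $\B$. Indeed $PB$ meets $a$ and $c$ only at the vertex $B\notin\B$, and it meets $b$ only at $P$. It then remains to check that $PB$ avoids every $D_i$: if $D_i\in PB$ then $B,P,D_i$ would be collinear, forcing $P=BD_i\cap b=B_i\notin\B$, contrary to $P\in\B$. Hence $PB\cap\B=\{P\}$, so $PB$ is tangent.

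The heart of the argument is the uniqueness of this tangent, and this is where the choice of $P$ on $b$ pays off. Since $P\notin a$, projection from $P$ gives a bijection between the pencil of lines through $P$ and the points of $a$; under it $b\mapsto a\cap b=C$ and $PB\mapsto B$, while every remaining line through $P$ meets $a$ in a point of $a\setminus\{B,C\}=a\cap\B$ and is therefore a secant. Finally $b$ itself is a secant because $|b\cap\B|\ge 2$. Thus $PB$ is the only tangent through $P$, which is exactly the $r_\infty$-property at $P$ with $r_\infty=PB$. I expect this uniqueness step to be the main obstacle: had one instead picked $P$ on $a$, the deleted points $B_i$ on $b$ would create candidate extra tangents $PB_i$ (tangent precisely when $PB_i\cap c$ is some deleted $C_j$), and controlling these coincidences seems hopeless for large $n$. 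The asymmetry of the construction, with deletions confined to $b$ and $c$, is exactly what makes the $b$-side argument clean, and the role of the bound $k\ge 2q$ is precisely to keep $b$ a secant.

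To see that $\B$ is not a blocking semioval, I would show that the added point $D_1$ carries two distinct tangents. The line $BD_1$ meets $b$ only at $B_1=BD_1\cap b\notin\B$, meets $a$ and $c$ only at the vertex $B\notin\B$, and meets $\ell$ only at $D_1$ because $B\notin\ell$; hence $BD_1\cap\B=\{D_1\}$. Symmetrically $CD_1\cap\B=\{D_1\}$, using $C_1=CD_1\cap c\notin\B$ and $C\notin\ell$. These two lines are distinct, since $BD_1=CD_1$ would force $B,C,D_1$ collinear on $a=BC$, whereas $D_1\notin a$. Thus $D_1$ has at least two tangents and $\B$ fails the semioval condition. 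The same computation explains why $D_1$ is a poor candidate for the $r_\infty$-property, confirming that the good point must be taken on a side.
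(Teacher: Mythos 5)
Your proposal is correct and follows essentially the same route as the paper's proof: the paper takes $P\in\B\cap c$ and shows $PC$ is the unique tangent by checking $PC\cap\ell$ is no $D_i$ and projecting the remaining pencil onto $a\setminus\{B,C\}\subseteq\B$, which is exactly your argument with the roles of the symmetric sides $b$ and $c$ (and of the vertices $B$ and $C$) interchanged. The non-semioval step is also identical, exhibiting the two tangents $BD_i=BB_i$ and $CD_i=CC_i$ through $D_i$.
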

\begin{proof} Let $2q\le k \le 3q-5$. Then, $2q\le 3q-3-n\le 3q-5$, and consequently, $2\le n\le q-3$. The set $\B$ consists in $q-1$ points of the line $a$, $n$ points of the line $\ell$, and $2(q-n-1)$ points of the lines $b, c$. It follows that the line $c$ contains at least two points of $\B$. Indeed, for $n=q-3$, the line $c$ has $q-n-1=2$ points of $\B$. 

Let $P\in \B\cap c$. We prove that $PC$ is the unique tangent to $\B$ through $P$. Indeed, if $PC\cap \ell=K$, then, $K\notin \{D_i, \, i=1, \ldots, n\}$, since $D_iC\cap c=C_i\notin \B$.

It follows that $PC$ is a tangent since it intersects $\B$ in only one point $P$. Moreover, $PA$ is a secant because $c$ contains two points in $\B$, and all other lines through $P$ meet the line $a$. To prove that $\B$ is not a blocking semioval, it is sufficient to observe that through the points $D_i$ there are two tangents, that is, $BB_i$ and $CC_i$.
\end{proof}

\begin{Lemma} \label{lem2} Let $\B$ be a minimal blocking set in $\PG(2, q)$, $q \ge 4$, built with a $(2q-1)$-construction, then $\B$ does not have the $r_{\infty}$-property.
\end{Lemma}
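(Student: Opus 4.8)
The plan is to prove the contrapositive at the level of points: I will show that \emph{every} point of $\B$ lies on at least two tangents, so that no point of $\B$ has a unique tangent and hence $\B$ fails the $r_\infty$-property at every $P\in\B$. (Recall $\B$ is minimal, so by \cite[Lemma 13.1]{JH98} every point of $\B$ already lies on at least one tangent; the content is to produce a \emph{second} one.) First I would record the shape of a $(2q-1)$-construction: from \eqref{eq:size} the size $2q-1=3q-3-n$ forces $n=q-2$, so among the $q-1$ points of $\ell$ other than $A$ and $A'=\ell\cap a$, all but one are used as some $D_i$. Writing $E$ for the unique unused such point, I would check that $\B$ consists of the $q-1$ points of $a$ different from $B,C$ (one of which is $A'$), the $q-2$ points $D_i$, exactly one point $\beta$ on $b$ and exactly one point $\gamma$ on $c$. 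Projecting $\ell$ onto $b$ from $B$ and onto $c$ from $C$ sends $A\mapsto A$, $A'\mapsto C$ (resp.\ $A'\mapsto B$), and each $D_i$ to the deleted point $B_i$ (resp.\ $C_i$); hence $\beta=BE\cap b$ and $\gamma=CE\cap c$, while $\ell=AE$ and $A'=AE\cap a$. The key observation for all that follows is that $\B\subseteq a\cup b\cup c\cup\ell$, so a line through $P\in\B$ is tangent precisely when it misses the finitely many points of $\B\cap(a\cup b\cup c\cup\ell)$ other than $P$.

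With this structure I would then exhibit two tangents at each point. At $\gamma$ the line $c$ is tangent since $\B\cap c=\{\gamma\}$, and $C\gamma$ is tangent because it meets $a$ and $b$ only in the vertex $C\notin\B$ and cannot contain a $D_i$ (else $\gamma=CD_i\cap c=C_i\notin\B$) nor $A'$ (else $\gamma\in a$); symmetrically $b$ and $B\beta$ are tangent at $\beta$. At each $D_i$ the lines $BD_i$ and $CD_i$ are tangent: each meets two sides of the triangle only in a vertex, meets the third side in a deleted point $B_i$ or $C_i$, and meets $\ell$ only at $D_i$. For $P\in\B\cap a$ with $P\neq A'$, the vertexless-triangle tangent $PA$ survives (it meets $b,c,\ell$ only in $A$ and avoids every $D_i$), and I claim $PE$ is a second tangent: if $PE$ met $c$ at $\gamma$, then since also $\gamma\in CE$ the points $P,C,E,\gamma$ would be collinear, forcing $P,C$ on a line through $E$; as $P,C\in a$ this line is $a$, giving $E\in a$, impossible since $\ell\cap a=A'\neq E$. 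The same argument with $B$ in place of $C$ rules out $PE$ passing through $\beta$, so $PE\cap\B=\{P\}$, and $PE\neq PA$ as they meet $\ell$ in $E\neq A$.

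The genuinely delicate point is $A'=a\cap\ell$, and this is where I expect the main obstacle: both natural candidates collapse, since $A'A=\ell$ and $A'E=\ell$ are now secants (containing all the $D_i$). Here I would count. Of the $q+1$ lines through $A'$, two are the secants $a$ and $\ell$; any of the remaining $q-1$ lines meets $a$ and $\ell$ only at $A'$, hence is tangent unless it passes through $\beta$ or through $\gamma$. Thus at least $q-3$ of these lines are tangents, which already yields two tangents whenever $q\ge 5$. The leftover case is $q=4$, where $q-3=1$; there I must show the two exceptional lines $A'\beta$ and $A'\gamma$ actually coincide, leaving $q-2=2$ tangents. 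This is exactly the assertion that the three cevian feet $A'=AE\cap a$, $\beta=BE\cap b$, $\gamma=CE\cap c$ of the point $E$ with respect to the triangle $ABC$ are collinear. Heuristically these feet satisfy the Ceva relation (product of ratios equal to $1$) whereas collinearity is the Menelaus relation (product equal to $-1$), so the two conditions agree precisely when $-1=1$, i.e.\ in even characteristic; a one-line coordinate computation confirms that $A',\beta,\gamma$ are collinear if and only if $q$ is even. This disposes of $q=4$, and combining all cases shows that every point of $\B$ carries at least two tangents, so $\B$ has the $r_\infty$-property at no point.
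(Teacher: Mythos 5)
Your proof is correct and follows essentially the same route as the paper's: identify the shape of a $(2q-1)$-construction (your $E$ is the paper's $T$, your $\beta,\gamma$ are its $X=BT\cap AC$ and $Y=CT\cap AB$), exhibit the explicit tangent pairs $\{c, C\gamma\}$, $\{b, B\beta\}$, $\{BD_i, CD_i\}$, $\{PA, PE\}$ at the respective points, and finish at $A'=a\cap\ell$ by counting lines through $A'$. The one place where you go beyond the paper is precisely that last step, and your extra care is warranted: the paper disposes of $A'$ with ``since every line has at least $q+1\ge 5$ points, there exist at least two tangents through $A'$,'' but the count $q+1$ minus the two secants $a,\ell$ minus the (at most two) lines $A'\beta$, $A'\gamma$ only guarantees $q-3$ tangents, which is $1$ when $q=4$. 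Your resolution --- that $A'$, $\beta$, $\gamma$ are the cevian feet of $E$ with respect to the triangle $ABC$ and are collinear exactly in even characteristic (the determinant of $(0{:}1{:}1)$, $(1{:}0{:}1)$, $(1{:}1{:}0)$ is $2$), so that $A'\beta=A'\gamma$ and two tangents survive when $q=4$ --- is correct and closes a gap that the paper's own argument leaves open in the boundary case $q=4$ (a case which the paper only recovers indirectly via Theorem \ref{thm:no}).
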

\begin{proof} A minimal blocking set built with a $(2q-1)$-construction consists in 
$q-1$ points on the line $a$, $q-2$ points on the line $\ell$, one point on the line $b$, and one point on the line $c$. This construction determines the following minimal blocking set:
\[\B = \{a \setminus\{B, C\}\}\cup  \{\ell \setminus\{A,A',T\}\}\cup\{BT\cap AC=X, AB\cap CT=Y\},\]
where $T=\ell\setminus\{D_1,\dots,D_{q-2},A,A'\}$.

Observe that $BT, AC$ are two tangents to $\B$ through $X$, and $CT, AB$ are two tangents to $\B$ through $Y$. Moreover, for every $P\in \B\cap\{a\setminus A'\}$, $PT, PA$ are tangents and for every $P\in \B\cap\{\ell \setminus A'\}$, $PB, PC$ are tangents. Finally, since every line has at least $q+1\ge 5$ points, there exist at least two tangents through $A'$. 
\end{proof}

From Lemmas \ref{lem1} and \ref{lem2}, the next statement follows.
\begin{Theorem} \label{thm:existence}In $\PG(2, q)$, $q\ge 5$, there exists a minimal blocking set of size $k$, for every $2q\le k\le 3q-5$, with the $r_{\infty}$-property which is not a blocking semioval.
\end{Theorem}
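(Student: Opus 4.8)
The plan is to combine the two preceding lemmas directly, since Theorem \ref{thm:existence} is precisely their conjunction packaged as an existence statement. The parameter bookkeeping from Setup \ref{setup} does all the real work: recall that a $k$-construction yields a minimal blocking set of size $k = 3q-3-n$ with $2 \le n \le q-2$, so as $n$ ranges over these integers, $k$ ranges over every integer from $2q-1$ (at $n=q-2$) up to $3q-5$ (at $n=2$). The task is therefore to verify that for each target size $k$ in the stated range $2q \le k \le 3q-5$, there is a valid choice of $n$ producing a blocking set with the desired properties.

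First I would fix an integer $k$ with $2q \le k \le 3q-5$ and set $n = 3q-3-k$, observing that the inequalities on $k$ translate exactly into $2 \le n \le q-3$; in particular $n \ge 2$ so the $k$-construction is well-defined, and the size of the resulting $\B$ is $k$ by equation \eqref{eq:size}. Next I would invoke Lemma \ref{lem1}: since the hypotheses there are exactly $q \ge 5$ and $2q \le k \le 3q-5$, that lemma immediately gives that $\B$ has the $r_\infty$-property with respect to some point $P \in \B$ and that $\B$ is not a blocking semioval. This already yields everything claimed, so the proof is essentially a one-line deduction once the parameter ranges are matched up.

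The role of Lemma \ref{lem2} is to explain why the lower endpoint of the size range is $2q$ rather than $2q-1$: the $(2q-1)$-construction (the case $n=q-2$) fails the $r_\infty$-property, so it must be excluded, and this is the reason the upper bound on $n$ is sharpened from $q-2$ to $q-3$. I would mention this to justify that the range $2q \le k \le 3q-5$ is exactly the set of sizes for which the $k$-construction produces the desired object, and that the boundary case $k=2q-1$ is genuinely different rather than merely omitted.

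The main obstacle here is not conceptual but bookkeeping: one must be careful that the translation between $k$ and $n$ is correct at both endpoints and that the constraint $q \ge 5$ (needed so that the interval $2 \le n \le q-3$ is nonempty) is respected. Concretely, $q \ge 5$ guarantees $q-3 \ge 2$, so there is at least one admissible value of $n$; for $q = 4$ the interval would be empty, which is consistent with the hypothesis $q \ge 5$ in both lemmas. Since all the geometric verification has already been carried out in Lemmas \ref{lem1} and \ref{lem2}, the proof of Theorem \ref{thm:existence} reduces to recording this correspondence, and I would keep it short accordingly.
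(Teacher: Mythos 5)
Your proposal is correct and follows exactly the paper's route: the theorem is stated as an immediate consequence of Lemmas \ref{lem1} and \ref{lem2}, with the parameter translation $n=3q-3-k$ (so $2\le n\le q-3$, nonempty since $q\ge5$) doing the only work. Your additional remark on why $k=2q-1$ is excluded matches the role Lemma \ref{lem2} plays in the paper.
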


\begin{Theorem}\label{Thm:q>=5,k>=2q} In $\PG(2, q)$, $q\ge 5$, every minimal blocking set built with a $k$-construction, with $2q\le k\le 3q-5$, has the $r_{\infty}$-property with respect to every point $P\in \B\cap (\{a\setminus \bigcup_{i,j=1}^{n}\{B_iC_j\cap a\}\}\cup b\cup c)$.
\end{Theorem}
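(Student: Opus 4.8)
The goal is to upgrade Lemma \ref{lem1}, which establishes the $r_\infty$-property at a single point $P\in\B\cap c$, to the claim that the $r_\infty$-property holds at \emph{every} point $P$ lying on the triangle sides $a$, $b$, $c$ and surviving in $\B$, with the stated exceptional points on $a$ removed. The strategy is to partition the relevant points of $\B$ into three classes according to which side they lie on, and to exhibit for each class a unique tangent line while verifying that all remaining lines through the point are secants. The hard part, and the reason for the exclusion set $\bigcup_{i,j=1}^{n}\{B_iC_j\cap a\}$, will be the analysis of points on the line $a$: a point on $a$ can be joined to the deleted points $B_i,C_j$ in such a way as to create a second tangent, so one must carefully count how the lines $PB$ and $PC$ from such a point meet $b$, $c$, and $\ell$.

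\textbf{Points on $b$ and $c$.}
First I would handle a point $P\in\B\cap c$ exactly as in Lemma \ref{lem1}: the line $PC$ meets $\ell$ in a point $K$ which cannot be any $D_i$ (since $D_iC\cap c=C_i\notin\B$), so $PC$ is tangent, while $PA$ is a secant because $c$ retains at least two points of $\B$ in the range $2\le n\le q-3$, and every other line through $P$ meets $a$ in a point of $\B$. By the symmetry of the construction in the roles of $b$ and $c$ — both are ``shortened'' sides carrying $q-n-1\ge 2$ surviving points, with $B$ and $C$ playing symmetric roles relative to $A$ — the identical argument gives, for $P\in\B\cap b$, that $PB$ is the unique tangent and all other lines are secants. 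I would state this symmetry explicitly to avoid repeating the computation.

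\textbf{Points on $a$.}
The main obstacle is a point $P\in\B\cap a$. Here $a$ retains the full complement of $q-1$ points, and the two ``long'' join lines are $PB$ and $PC$. I expect the unique tangent through such $P$ to be the line $PA'=a$ itself in the sense of the line through $P$ and the special point — more precisely, one must show that exactly one line through $P$ is tangent and identify it. The delicate count is this: a line through $P$ other than $a$ meets $b\cup c\cup\ell$, and it fails to be a secant precisely when it meets $\B$ in no second point, i.e. when it passes only through deleted points $B_i,C_j$ or through $A,B,C$. The excluded points $B_iC_j\cap a$ are exactly those $P$ for which the line $B_iC_j$ passes through $P$, producing a line that meets $b$ at the deleted point $B_i$ and $c$ at the deleted point $C_j$ and is therefore tangent — a \emph{second} tangent beyond the expected one, violating the $r_\infty$-property. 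Removing these points guarantees that every join line from $P$ to a deleted point on $b$ or $c$ still meets $\B$ in a surviving point on the complementary side, so that only one tangent survives.

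\textbf{Assembling the argument.}
Concretely, for $P\in\B\cap\{a\setminus\bigcup_{i,j}\{B_iC_j\cap a\}\}$ I would argue: (i) the lines $PB$ and $PC$ are secants, since $b$ and $c$ each carry $\ge 2$ surviving points and one checks that $PB$, $PC$ avoid meeting $\B$ only at a deleted vertex; (ii) the line $PA$ through the vertex $A$ is the candidate tangent, and it meets $\ell$ at a point which, by the exclusion hypothesis on $P$, is not forced to coincide with any $D_i$, so it contacts $\B$ only at $P$; (iii) every remaining line through $P$ meets the full side $b$ or $c$ in a point of $\B$ and is hence a secant, where the exclusion of the points $B_iC_j\cap a$ is exactly what rules out the pathological lines that would hit $b$ and $c$ simultaneously at deleted points. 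Combining the three classes completes the proof, with the case division and the tangent-counting on $a$ being the only steps requiring care; the verifications on $b$ and $c$ reduce to Lemma \ref{lem1} by symmetry.
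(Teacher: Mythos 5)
Your proposal follows essentially the same route as the paper: points of $\B$ on $b$ and $c$ are handled by the tangents $PB$, resp.\ $PC$, exactly as in Lemma \ref{lem1}, and for points on $a$ the unique tangent is $PA$, the exclusion of $\bigcup_{i,j}\{B_iC_j\cap a\}$ being needed precisely because a line through $P\in a$ other than $a$ and $PA$ can avoid $\B$ only by meeting $b$ and $c$ at deleted points $B_i$ and $C_j$. The only blemish is cosmetic: since $B,C\in a$, the ``join lines'' $PB$, $PC$ (and $PA'$) for $P\in a$ all coincide with $a$ itself, so your step (i) is vacuous and the appeal to the exclusion hypothesis in step (ii) is unnecessary ($PA\cap\ell=A$ automatically, as $A\in\ell$); neither slip affects the correctness of the assembled argument.
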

\begin{proof} For every point $P\in \B\cap c$ the only tangent is $PC$, for every point $P\in \B\cap b$ the only tangent is $PB$. For every point $D_i$ there are two tangents, then we must exclude these points and also points $P$ of $\B\cap \{a\setminus A'\}$, which are intersection of the lines $B_iC_j$ with $a$, because for these points we have the tangents $B_iC_j$ and $PA$. Similarly, for $A'$ there are at least two tangents $B_1C_2$ and $B_2C_1$.
\end{proof}

Combining Theorem \ref{thm:existence} with Theorem \ref{thm:size}, we obtain the next result.

\begin{Theorem} \label{thm: 5} If $q\ge5$, then there exists a minimal blocking set with the $r_{\infty}$-property of size $k$ for every $k$ such that $2q\le k\le 3(q-1)$.
\end{Theorem}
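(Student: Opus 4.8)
The plan is to assemble the final statement from results already established in the excerpt, combining the explicit $k$-construction range with the known existence results for blocking semiovals. The target range $2q\le k\le 3(q-1)$ naturally splits into three pieces: the interval $2q\le k\le 3q-5$, the single value $k=3q-4$, and the single value $k=3(q-1)$. I would handle each piece by citing the appropriate earlier result, so that the proof is essentially a bookkeeping argument that checks the three pieces exhaust the interval.

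First I would dispose of the bulk of the range. By Theorem \ref{thm:existence}, for every $k$ with $2q\le k\le 3q-5$ there exists a minimal blocking set in $\PG(2,q)$ with the $r_\infty$-property (in fact one that is not a blocking semioval), so every such $k$ is realized. Next I would handle the top endpoint $k=3(q-1)$: by Remark \ref{Rem:vertexlesstriangle}, the vertexless triangle is a blocking semioval of size $3(q-1)$, and since any blocking semioval has the $r_\infty$-property with respect to every point (Remark \ref{Rem:blocksemiovals}), this value is realized as well. Finally, for the remaining value $k=3q-4$, I would invoke Theorem \ref{thm:size}(b), which guarantees the existence of a blocking semioval of size $3q-4$ for every $q\ge5$; again by Remark \ref{Rem:blocksemiovals} this blocking semioval has the $r_\infty$-property, so $k=3q-4$ is realized.

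It then remains only to verify that the three ranges $\{3q-5,\dots,2q\}$, $\{3q-4\}$, and $\{3(q-1)\}=\{3q-3\}$ together cover the full interval $[2q,3q-3]$ without gaps. Since $3q-5$, $3q-4$, $3q-3$ are three consecutive integers and $2q\le 3q-5$ precisely when $q\ge 5$, the union is exactly $\{2q,2q+1,\dots,3q-3\}=[2q,3(q-1)]$, which is the claimed range. This is where I would be slightly careful: I must check that the hypothesis $q\ge5$ makes $2q\le 3q-5$ hold (equivalently $q\ge5$), so that the interval $2q\le k\le 3q-5$ is nonempty and the three pieces genuinely abut. For $q=5$ the interval $[2q,3q-5]=[10,10]$ is a single point, and the three isolated values $10,11,12$ still cover $[10,12]=[2q,3(q-1)]$, so the boundary case is fine.

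The argument presents essentially no obstacle, since all the substantive work has been done in the preceding lemmas and theorems; the only thing to watch is the arithmetic of the endpoints and the $q\ge5$ hypothesis. I would therefore present the proof as: for $2q\le k\le 3q-5$ apply Theorem \ref{thm:existence}; for $k=3q-4$ apply Theorem \ref{thm:size}(b) together with Remark \ref{Rem:blocksemiovals}; and for $k=3q-3$ apply Remark \ref{Rem:vertexlesstriangle} together with Remark \ref{Rem:blocksemiovals}. Noting that these three cases exhaust $\{k:2q\le k\le 3(q-1)\}$ when $q\ge5$ completes the proof.
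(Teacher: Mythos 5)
Your proposal is correct and follows essentially the same route as the paper: the authors also split the range into $2q\le k\le 3q-5$ (Theorem \ref{thm:existence}), $k=3q-4$ (Theorem \ref{thm:size}(b) with Remark \ref{Rem:blocksemiovals}), and $k=3q-3$ (Remark \ref{Rem:vertexlesstriangle}). Your extra check that the three pieces abut exactly when $q\ge5$ is a sensible addition but not a departure from their argument.
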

\begin{proof}
	From Theorem \ref{thm:existence} the desired blocking sets exist for any $2q\le k\le 3q-5$. For $k=3q-4$ the assertion follows from Theorem \ref{thm:size}(b) together with Remark \ref{Rem:blocksemiovals}. Finally, for $k=3q-3$ we can apply Remark \ref{Rem:vertexlesstriangle}.
\end{proof}

\section{Blocking sets with the $\Pi$-property in the Affine Planes}\label{secPG:4}

We have seen in Theorem \ref{thm:existence} that for $q\ge4$, and any $2q\le k\le 3q-5$, there exists a blocking set of size $k$ in $\PG(2,q)$ having the $r_\infty$-property. The natural question arises whether there exists any blocking set in $\PG(2,q)$, $q\ge4$, with the $r_\infty$-property, having size $k\le 2q-1$. Surprisingly, we have
\begin{Theorem} \label{thm:no} Every blocking set $\B$ in $\PG(2, q)$ with size $k\le 2q-1$ does not verify the $r_{\infty}$-property with respect to any point $P\in \B$.
\end{Theorem}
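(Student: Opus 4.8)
The plan is to argue by contradiction and reduce to the affine plane obtained by deleting the tangent $r_\infty$, where the Jamison--Brouwer--Schrijver bound applies. Suppose $\B$ is a blocking set of $\PG(2,q)$ with $|\B|=k\le 2q-1$ that has the $r_\infty$-property with respect to some $P\in\B$. Write $\ell=r_\infty$ and form the affine plane $\AG(2,q)=\PG(2,q)\setminus\ell$. Since $\ell$ is a tangent at $P$, we have $\B\cap\ell=\{P\}$; hence every point of $\B\setminus\{P\}$ is an affine point, and I would set $\B'=\B\setminus\{P\}=\B\cap\AG(2,q)$, a set of exactly $k-1$ points.

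The heart of the argument is to show that $\B'$ is a blocking set of $\AG(2,q)$, i.e.\ that $\B'$ meets every affine line $m$ (every line $m\ne\ell$ of $\PG(2,q)$). I would split into two cases according to whether $m$ passes through $P$. If $P\in m$, then $m\ne r_\infty$, so the $r_\infty$-property forces $m$ to be a secant to $\B$; thus $m$ carries a point $R\in\B$ with $R\ne P$, and since $R\notin\ell$ (because $\B\cap\ell=\{P\}$) this $R$ is an affine point of $\B'$ lying on $m$. If $P\notin m$, then $m$ meets $\ell$ in a single point $Q\ne P$, which is not in $\B$; as $\B$ blocks $m$ in $\PG(2,q)$, there is a point $R\in m\cap\B$ with $R\ne Q$, whence $R\notin\ell$ and again $R$ is an affine point of $\B'$ on $m$. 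In both cases $m$ is blocked, so $\B'$ is a blocking set of $\AG(2,q)$.

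To conclude, observe that any blocking set of $\AG(2,q)$ contains a minimal one, so the Jamison--Brouwer--Schrijver Theorem \cite{Jamison, BS78} (Lemma~\ref{lem:[7]}) yields $|\B'|\ge 2q-1$. Therefore $k=|\B|=|\B'|+1\ge 2q$, contradicting $k\le 2q-1$, and the theorem follows.

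I expect the only delicate point to be the verification that $\B'$ blocks every affine line, and there the essential ingredient is precisely the $r_\infty$-property: it is the secant condition on the $q$ lines through $P$ other than $\ell$ that guarantees each such line still meets $\B$ after $P$ is deleted, while lines missing $P$ are blocked automatically because $\ell$ contributes no point of $\B$ other than $P$. Note that this route bypasses the bijection of Proposition~\ref{propo:alpha} and the $\Pi$-property; that machinery is useful for the finer correspondence between minimal blocking sets in $\PG(2,q)$ and $\AG(2,q)$, but for the mere size bound asserted here the direct affine reduction already suffices.
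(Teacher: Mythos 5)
Your proof is correct, and it reaches the Jamison--Brouwer--Schrijver bound by a more direct route than the paper. Both arguments share the same core idea: delete the tangency point $P$, observe that what remains lives in $\AG(2,q)=\PG(2,q)\setminus r_\infty$ and blocks every affine line, and then contradict Lemma~\ref{lem:[7]}. The paper, however, first passes to a \emph{minimal} blocking subset $\B'\subseteq\B$ containing $P$, asserts that $\B'$ still has the $r_\infty$-property at $P$, and then invokes the bijection $\alpha$ of Proposition~\ref{propo:alpha} to produce the affine blocking set $\B'\setminus P$. You instead verify the affine blocking condition for $\B\setminus\{P\}$ directly, via the case split on whether the affine line passes through $P$ (secancy from the $r_\infty$-property) or not (blocked by $\B$ at a point off $\ell$ since $\B\cap\ell=\{P\}$); this is essentially the blocking half of Lemma~\ref{lem:PG(2,q)} applied to $\B$ itself, with no minimality needed. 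Since Lemma~\ref{lem:[7]} bounds \emph{every} affine blocking set, not only minimal ones, your extra remark about passing to a minimal affine blocking set is superfluous but harmless. What your version buys is the avoidance of the one delicate step in the paper's argument, namely that a minimal blocking subset of $\B$ through $P$ inherits the $r_\infty$-property (a secant to $\B$ through $P$ could a priori meet the smaller set only at $P$); your direct computation makes that issue moot, at the mild cost of not reusing the $\X_{P,\ell}\to\Y_{\Pi_P}$ machinery that the paper develops for its own sake.
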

\begin{Remark} \em Note that Lemma \ref{lem2} is a particular case of Theorem \ref{thm:no}.
\end{Remark}

In order to prove the theorem, firstly we investigate classes of minimal blocking (semiovals) sets in the affine plane $\AG(2, q)$. The next general lemma will be fundamental. It can be considered as a kind of permanence property.

\begin{Lemma} \label{lem:PG(2,q)} Let $\B$ be a minimal blocking set in $\PG(2, q)$, which has the $r_{\infty}$-property with respect to $P\in \B$, then $\B'=\B \setminus P$ is a minimal blocking set of $\AG(2, q) = \PG(2, q)\setminus r_{\infty}$.
\end{Lemma}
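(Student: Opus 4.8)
The plan is to show that removing the point $P$, through which $r_\infty$ is the unique tangent, turns the projective blocking set $\B$ into a minimal blocking set of the affine plane $\AG(2,q) = \PG(2,q)\setminus r_\infty$. I would argue in two stages: first that $\B' = \B\setminus P$ is a blocking set of $\AG(2,q)$, and then that it is minimal.

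First I would verify the blocking property. An affine line of $\AG(2,q)$ is, by definition, a projective line $m \neq r_\infty$ of $\PG(2,q)$. Since $\B$ is a blocking set of $\PG(2,q)$, every such $m$ meets $\B$ in at least one point. The only way $m$ could fail to meet $\B'$ is if $m\cap\B = \{P\}$, i.e.\ $m$ is a tangent to $\B$ through $P$. But the $r_\infty$-property at $P$ says the \emph{only} tangent to $\B$ through $P$ is $r_\infty$ itself, and $r_\infty$ has been deleted from the affine plane. Hence every affine line $m$ either avoids $P$ (and meets $\B$, so meets $\B'$) or passes through $P$ but, not being $r_\infty$, is a secant, so $m\cap\B$ contains a second point lying in $\B'$. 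Either way $m\cap\B'\neq\emptyset$, so $\B'$ blocks every line of $\AG(2,q)$.

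Next I would establish minimality. By the criterion quoted from \cite[Lemma 13.1]{JH98}, it suffices to produce, for each point $Q\in\B'$, an affine line tangent to $\B'$ through $Q$. Here I use that $\B$ is minimal in $\PG(2,q)$: for $Q\in\B'\subseteq\B$ there is a projective line $t$ with $t\cap\B = \{Q\}$. If $t\neq r_\infty$, then $t$ is an affine line and $t\cap\B' = \{Q\}$ as well (deleting $P\neq Q$ cannot add points to the intersection), so $t$ witnesses affine minimality at $Q$. The one case needing care is $t = r_\infty$: that would mean the unique projective tangent at $Q$ is $r_\infty$, which no longer exists affinely. I would handle this by noting that $Q\in r_\infty$ in this case (since $t=r_\infty\ni Q$), and then find a second affine tangent at $Q$ by a counting argument on the lines through $Q$—the lines through $Q$ other than $r_\infty$ partition $\B\setminus\{Q\}$, and minimality forces at least one of the relevant lines through $Q$ to meet $\B$ only in $Q$; once $P$ is removed this yields an affine tangent to $\B'$.

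The main obstacle I anticipate is precisely this boundary case where the unique projective tangent at some $Q\in\B'$ coincides with $r_\infty$. Resolving it cleanly requires understanding how many points of $\B$ lie on $r_\infty$ and whether a different tangent direction survives the passage to the affine plane; this is where the $r_\infty$-property at $P$ must interact with the minimality of $\B$ at the \emph{other} points $Q$. I expect that a short incidence count—comparing the number of secants and tangents through $Q$ against the $q+1$ lines through $Q$—will close the gap, but this is the step that demands the most care, since everything else follows routinely from the definitions and the tangent criterion \cite[Lemma 13.1]{JH98}.
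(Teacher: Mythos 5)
Your blocking-set argument and the first half of your minimality argument coincide with the paper's proof. The gap is the ``boundary case'' you flag and leave open: you treat $t=r_\infty$ (the projective tangent at $Q\in\B'$ being the deleted line) as a live possibility requiring a separate incidence count, but this case is vacuous, and the counting argument you sketch would not close it anyway. By the $r_\infty$-property at $P$, the line $r_\infty$ \emph{is} a tangent to $\B$ at $P$, i.e.\ $r_\infty\cap\B=\{P\}$. Hence no point $Q$ of $\B'=\B\setminus P$ lies on $r_\infty$ at all (this is also what guarantees that $\B'$ really is a set of affine points), and a fortiori $r_\infty$ cannot be tangent to $\B$ at such a $Q$, since a tangent at $Q$ must contain $Q$. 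So every tangent to $\B$ at $Q$ supplied by the minimality of $\B$ in $\PG(2,q)$ is automatically a line of $\AG(2,q)$, and your first case already covers everything; this is exactly how the paper concludes.

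The fallback you propose for the phantom case is, moreover, circular as stated: ``minimality forces at least one of the lines through $Q$ to meet $\B$ only in $Q$'' merely re-asserts the existence of a tangent at $Q$, which is where you started, and does not by itself exclude that this tangent is $r_\infty$. (Even your own observation that $Q\in r_\infty$ in that case gives an immediate contradiction: then $r_\infty$ would contain the two distinct points $P$ and $Q$ of $\B$ and so would be a secant, not the tangent at $P$.) Note also that minimality gives \emph{at least one} tangent at $Q$, not a unique one, so ``the unique projective tangent at $Q$'' is an overstatement; uniqueness holds only at $P$. With the single observation $r_\infty\cap\B=\{P\}$ inserted, your proof is complete and is essentially the paper's.
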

\begin{proof} Every line of $\PG(2, q)$ through $P$, which is different from $r_{\infty}$, intersects $\B'$, and every line not through $P$ intersects $\B'$. Thus $\B'$ is a blocking set of $\AG(2, q)$. Let $P'$ be a point of $\B'$, then there is a line $r$ of $\PG(2, q)$ that intersects $\B'$ exactly in $P'$ and consequently a line of $\AG(2, q)$ that intersects $\B'$ exactly in $P'$. Then $\B'$ is minimal.
\end{proof}

\begin{Example} \label{es:2} \em In $\PG(2, 3)$ with the set of points $\{ABCDEFGHIJKLM\}$, we consider the minimal blocking set $\B=\{ABCDFI\}$. Next picture describes $\PG(2,3)$. 
	\vspace*{-4em}
	\begin{figure}[H]
		\begin{tikzpicture}[scale=1.2,rotate=-45]
			\foreach \x in {0,...,2}
			\foreach \y in {0,...,2} {
				\pgfmathtruncatemacro\nk{\y*3+\x+1}
				\coordinate (\nk) at (\x,2-\y);
			}
			\coordinate (11) at ({3+sqrt(2)},1);
			\coordinate (13) at (1,{-1-sqrt(2)});
			\coordinate (10) at ($(2,2)+(45:2)$);
			\coordinate (12) at ($(2,0)+(-45:2)$);
			\begin{knot}
				\strand (10) arc[radius={2+sqrt(2)},start angle=45,end angle=-90];
				\strand (1) -- (2) -- (3) to[out=0,in=135] (11);
				\strand (4) -- (5) -- (6) -- (11);
				\strand (7) -- (8) -- (9) to[out=0,in=-135] (11);
				\strand (1) -- (4) -- (7) to[out=-90,in=135] (13);
				\strand (2) -- (5) -- (8) -- (13);
				\strand (3) -- (6) -- (9) to[out=-90,in=45] (13);
				\strand (7) -- (5) -- (3) -- (10);
				\strand (9) .. controls +(-2.5,-2.5) and +(-2.5,-2.5) .. (4) -- (2) to[out=45,in=-180] (10);
				\strand (1) .. controls +(-2.5,-2.5) and +(-2.5,-2.5) .. (8) -- (6) to[out=45,in=-90] (10);
				\strand (1) -- (5) -- (9) -- (12);
				\strand (7) .. controls +(-2.5,2.5) and +(-2.5,2.5) .. (2) -- (6) to[out=-45,in=90] (12);
				\strand (3) .. controls +(-2.5,2.5) and +(-2.5,2.5) .. (4) -- (8) to[out=-45,in=180] (12);
			\end{knot}
			\node[fill,draw,circle,blue,label=north east:$B$,minimum size=6pt, inner sep=0pt] at (1) {};
			\node[fill,draw,circle,blue,label=north east:$C$,minimum size=6pt, inner sep=0pt] at (2) {};
			\node[fill,draw,circle,blue,label=north east:$D$,minimum size=6pt, inner sep=0pt] at (3) {};
			\node[fill,draw,circle,blue,label=north east:$M$,minimum size=6pt, inner sep=0pt] at (4) {};
			\node[fill,draw,circle,blue,label=north east:$A$,minimum size=6pt, inner sep=0pt] at (5) {};
			\node[fill,draw,circle,blue,label=north east:$L$,minimum size=6pt, inner sep=0pt] at (6) {};
			\node[fill,draw,circle,blue,label=north east:$K$,minimum size=6pt, inner sep=0pt] at (7) {};
			\node[fill,draw,circle,blue,label=north east:$E$,minimum size=6pt, inner sep=0pt] at (8) {};
			\node[fill,draw,circle,blue,label=right:$F$,minimum size=6pt, inner sep=0pt] at (9) {};
			\node[fill,draw,circle,blue,label=right:$G$,minimum size=6pt, inner sep=0pt] at (10) {};
			\node[fill,draw,circle,blue,label=below:$H$,minimum size=6pt, inner sep=0pt] at (11) {};
			\node[fill,draw,circle,blue,label=below:$J$,minimum size=6pt, inner sep=0pt] at (12) {};
			\node[fill,draw,circle,blue,label=below:$I$,minimum size=6pt, inner sep=0pt] at (13) {};
		\end{tikzpicture}
	\end{figure}
	\vspace*{-1em}

From Corollary \ref{cor:PG(2, 3)}, since $\mathcal{B}$ is a vertexless triangle, then it is a blocking semioval and consequently verifies the $r_{\infty}$-property. The sides of the triangle are:
\[a=(BKIM),\,\,\, b=(ADGK),\,\,\, c=(CFGM).\]
 
We consider the line $r_{\infty}=(GJHI)$ and the affine plane $\AG(2, 3) = \PG(2, 3)\setminus r_{\infty}$ which is depicted below.
\vspace*{-3em}
\begin{figure}[H]
	\begin{tikzpicture}[scale=1.2,rotate=-90]
		\foreach \x in {0,...,2}
		\foreach \y in {0,...,2} {
			\pgfmathtruncatemacro\nk{\y*3+\x+1}
			\coordinate (\nk) at (\x,2-\y);
		}
		\begin{knot}
			\strand (1) -- (2) -- (3);
			\strand (4) -- (5) -- (6);
			\strand (7) -- (8) -- (9);
			\strand (1) -- (4) -- (7);
			\strand (2) -- (5) -- (8);
			\strand (3) -- (6) -- (9);
			\strand (7) -- (5) -- (3);
			\strand (9) .. controls +(-2.5,-2.5) and +(-2.5,-2.5) .. (4) -- (2);
			\strand (1) .. controls +(-2.5,-2.5) and +(-2.5,-2.5) .. (8) -- (6);
			\strand (1) -- (5) -- (9);
			\strand (7) .. controls +(-2.5,2.5) and +(-2.5,2.5) .. (2) -- (6);
			\strand (3) .. controls +(-2.5,2.5) and +(-2.5,2.5) .. (4) -- (8);
		\end{knot}
		\node[fill,draw,circle,blue,label=above:$B$,minimum size=6pt, inner sep=0pt] at (1) {};
		\node[fill,draw,circle,blue,label=right:$C$,minimum size=6pt, inner sep=0pt] at (2) {};
		\node[fill,draw,circle,blue,label=below:$D$,minimum size=6pt, inner sep=0pt] at (3) {};
		\node[fill,draw,circle,blue,label=above:$M$,minimum size=6pt, inner sep=0pt] at (4) {};
		\node[fill,draw,circle,blue,label=right:$A$,minimum size=6pt, inner sep=0pt] at (5) {};
		\node[fill,draw,circle,blue,label=below:$L$,minimum size=6pt, inner sep=0pt] at (6) {};
		\node[fill,draw,circle,blue,label=above:$K$,minimum size=6pt, inner sep=0pt] at (7) {};
		\node[fill,draw,circle,blue,label=left:$E$,minimum size=6pt, inner sep=0pt] at (8) {};
		\node[fill,draw,circle,blue,label=below:$F$,minimum size=6pt, inner sep=0pt] at (9) {};
	\end{tikzpicture}
\end{figure}
Then $\B'=\B\setminus I=\{ABCDF\}$ is a minimal blocking set of size $5$ of the lines of the affine plane, which are:
\[(ADK), (CFM), (CKL), (BCD), (ACE), (ALM), (ABF), (BKM),\]
\[ (EFK), (DFL), (DEM), (BEL).\]
\end{Example}

Example \ref{es:2} highlights that in the affine plane $\AG(2, 3)$ we have minimal blocking sets of size $5$, since we have only the vertexless triangles whose size is $3(q-1)=6$ as  minimal blocking sets in the projective plane $\PG(2, 3)$.\medskip

Now we examine the existence of minimal blocking sets in $\AG(2, q)$, $q\ge 5$. Combining Theorems \ref{thm:existence} and \ref{thm: 5} with Lemma \ref{lem:PG(2,q)} we obtain the following two results.

\begin{Theorem} In $\AG(2, q)$, $q\ge 5$, there exists a minimal blocking set of size $k$ for every $k$ such that $2q-1\le k \le 3q-6$.
\end{Theorem}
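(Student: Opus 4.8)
The plan is to obtain this result as a direct corollary of the projective-to-affine transfer already established, combined with the range of sizes for which projective blocking sets with the $r_\infty$-property are known to exist. The key tool is Lemma \ref{lem:PG(2,q)}, which says that if $\B$ is a minimal blocking set in $\PG(2,q)$ with the $r_\infty$-property with respect to a point $P$, then deleting $P$ produces a minimal blocking set $\B'=\B\setminus P$ in the affine plane $\AG(2,q)=\PG(2,q)\setminus r_\infty$. Since $|\B'|=|\B|-1$, each projective example of size $k$ yields an affine example of size $k-1$, which shifts the size range down by exactly one.

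First I would invoke Theorem \ref{thm: 5} (or equivalently Theorem \ref{thm:existence}): for $q\ge 5$ and every integer $k$ with $2q\le k\le 3(q-1)$, there exists a minimal blocking set $\B$ in $\PG(2,q)$ with the $r_\infty$-property. Such a $\B$ has the $r_\infty$-property with respect to some point $P\in\B$ by definition, so the hypotheses of Lemma \ref{lem:PG(2,q)} are met. Applying the lemma gives a minimal blocking set $\B'=\B\setminus P$ of $\AG(2,q)$ with $|\B'|=k-1$. As $k$ ranges over $2q\le k\le 3(q-1)=3q-3$, the affine size $k-1$ ranges over $2q-1\le k-1\le 3q-4$. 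To land precisely on the claimed upper bound $3q-6$ one would simply restrict attention to the subrange $2q\le k\le 3q-5$ of Theorem \ref{thm:existence}, for which $k-1$ runs over $2q-1\le k-1\le 3q-6$; so the intended reference is Theorem \ref{thm:existence} rather than the full range of Theorem \ref{thm: 5}.

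Concretely, the proof is one line of bookkeeping: for each $k$ in $\{2q,\dots,3q-5\}$ take the $\PG(2,q)$-example of size $k$ furnished by Theorem \ref{thm:existence}, pick a point $P$ witnessing its $r_\infty$-property, delete it, and apply Lemma \ref{lem:PG(2,q)} to obtain the desired affine blocking set of size $k-1\in\{2q-1,\dots,3q-6\}$. I would also remark, as Example \ref{es:2} illustrates in the small case $q=3$, that this is the mechanism by which affine blocking sets strictly smaller than any projective one arise.

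There is no real obstacle here, since all the substantive work lies in the earlier results being cited. The only point requiring care is the indexing of the size ranges: one must check that the shift by $-1$ aligns the projective range $[2q,\,3q-5]$ with the stated affine range $[2q-1,\,3q-6]$, and thus confirm that the correct input theorem is Theorem \ref{thm:existence} (whose upper endpoint is $3q-5$) and not Theorem \ref{thm: 5} (whose upper endpoint is $3q-3$). Beyond verifying this endpoint arithmetic, the argument is a straightforward application of the permanence property of Lemma \ref{lem:PG(2,q)}.
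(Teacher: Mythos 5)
Your proposal is correct and follows exactly the paper's route: the paper derives this theorem by "combining Theorems \ref{thm:existence} and \ref{thm: 5} with Lemma \ref{lem:PG(2,q)}", i.e.\ taking the projective examples of size $k\in[2q,3q-5]$ from Theorem \ref{thm:existence} and deleting the point witnessing the $r_\infty$-property via Lemma \ref{lem:PG(2,q)} to land in the affine range $[2q-1,3q-6]$. Your observation that Theorem \ref{thm:existence} (not the full range of Theorem \ref{thm: 5}) is the correct input for this endpoint arithmetic is exactly right.
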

\begin{Theorem} In $\AG(2, q)$, if $q\ge 5$ is odd, there exists a minimal blocking set of size $k$ for every $k$ such that $2q-1\le k \le 3q-4$.
\end{Theorem}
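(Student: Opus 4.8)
The plan is to produce every required affine size by the same deletion principle used for the previous affine theorem, namely Lemma \ref{lem:PG(2,q)}, but fed now with the \emph{full} supply of projective blocking sets guaranteed by Theorem \ref{thm: 5} rather than only those coming from the $k$-construction of Theorem \ref{thm:existence}. Given an integer $k$ with $2q-1\le k\le 3q-4$, I set $k'=k+1$, so that $2q\le k'\le 3(q-1)$. By Theorem \ref{thm: 5} there is a minimal blocking set $\B$ of $\PG(2,q)$ of size $k'$ having the $r_{\infty}$-property; unwinding Definition \ref{Def:rproperty}, this furnishes a point $P\in\B$ whose unique tangent is a line $r_{\infty}$, all other lines through $P$ being secants. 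Lemma \ref{lem:PG(2,q)}, applied to $\B$, $P$ and $r_{\infty}$, then shows that $\B'=\B\setminus P$ is a minimal blocking set of $\AG(2,q)=\PG(2,q)\setminus r_{\infty}$ of size $k'-1=k$. Letting $k$ run over $2q-1,\dots,3q-4$ yields a minimal affine blocking set of each prescribed size, and since all affine planes of order $q$ are isomorphic, existence in $\PG(2,q)\setminus r_{\infty}$ gives existence in $\AG(2,q)$.

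The content beyond the previous affine theorem lies entirely in the two largest sizes $k=3q-5$ and $k=3q-4$, that is, in the projective sizes $k'=3q-4$ and $k'=3q-3$. For $k'=3q-3$ I would take $\B$ to be a vertexless triangle: by Remark \ref{Rem:vertexlesstriangle} it has the $r_{\infty}$-property, in fact at every point, so a distinguished $P$ and its tangent $r_{\infty}$ are immediately available and the deletion lowers the size to $3q-4$. For $k'=3q-4$ I would take $\B$ to be a blocking semioval of size $3q-4$, whose existence for $q\ge5$ is Theorem \ref{thm:size}(b); being a semioval it has, by Remark \ref{Rem:blocksemiovals}, a unique tangent at each of its points, hence the $r_{\infty}$-property at every point, and the deletion lowers the size to $3q-5$. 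The intermediate sizes $2q\le k'\le 3q-5$ are treated exactly as in the previous affine theorem, using the $k$-construction sets of Theorem \ref{thm:existence}, for which Lemma \ref{lem1} already locates a point $P\in\B\cap c$ realizing the property.

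The step demanding the most care is the extraction of the distinguished point at the two extremal sizes, since Theorem \ref{thm: 5} asserts only that $\B$ \emph{has} the $r_{\infty}$-property and does not name the point; before invoking Lemma \ref{lem:PG(2,q)} I must exhibit a concrete $P$ together with its tangent $r_{\infty}$ and check that $r_{\infty}$ meets $\B$ only in $P$, so that $\B\setminus P=\B\cap\AG(2,q)$ and the minimality of $\B'$ transfers correctly. For the vertexless triangle and the blocking semioval this is transparent, because every point qualifies. It is worth remarking that parity plays no role in this argument: both the deletion of Lemma \ref{lem:PG(2,q)} and the projective inputs of Theorem \ref{thm: 5} are available for every $q\ge5$, so the same reasoning in fact delivers a minimal affine blocking set of each size $2q-1\le k\le 3q-4$ whenever $q\ge5$, the oddness hypothesis not being essential to the conclusion.
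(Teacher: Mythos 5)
Your argument is correct and is exactly the paper's: the authors state this theorem without a separate proof, noting only that it follows by combining Theorems \ref{thm:existence} and \ref{thm: 5} with Lemma \ref{lem:PG(2,q)}, which is precisely your deletion of the distinguished point $P$ from a projective minimal blocking set with the $r_{\infty}$-property of size $k+1$. Your closing observation that oddness of $q$ plays no visible role is consistent with the paper's own statements, since Theorem \ref{thm: 5} and Theorem \ref{thm:size}(b) are both asserted for all $q\ge 5$ without a parity restriction.
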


In what follows, sometimes to avoid any ambiguity we will speak about a \emph{projective blocking set} when we refer to a blocking set of the projective plane $\PG(2,q)$, and of an \emph{affine blocking set} when we refer to a blocking set of the affine plane $\AG(2,q)$.\medskip

In order to introduce some special affine blocking sets and to highlight their connection with some projective blocking sets, we give the next definition.

\begin{Definition} A parallel class in an affine plane $\AG(2, q)$ is the set of all lines parallel to a line $\ell$. We denote it by $\Pi_\ell$ and call it the direction $\Pi_\ell$.
\end{Definition}

For instance, in Example \ref{es:2}, the lines $(ALM)$, $(BCD)$, $(EFK)$ of $\AG(2,3)$ are parallel.

In what follows we will regard any parallel class, generated by a line $\ell$ of $\AG(2, q)$, as a new point. 

Furthermore, we collect all new points into a new line $r_{\infty}$. Then we can define the projective plane $\PG(2, q)$ as follows: $\PG(2, q)= \AG(2, q)\cup r_{\infty} $. See for instance \cite{AP2018} and the reference therein.

We will indicate a parallel class also by $\Pi_P$ if it is the parallel class of all lines passing through $P\in r_{\infty}$ in $\PG(2, q)$.

\begin{Definition} \label{def:prodproperty} A blocking set $\B$ in $\AG(2, q)$ has the $\Pi$-property with respect to a direction $\Pi_\ell$ if there exists a parallel class $\Pi_\ell$ verifying the following conditions:
\begin{enumerate}
\item[\em(j)] through every point $Q\in \B$ there exists a line $m\notin \Pi_\ell$ tangent to $\B$,
\item[\em(jj)] not one line of $\Pi_\ell$ is contained in $\B$.
\end{enumerate}
\end{Definition}

Now with (j') we indicate the following condition that implies the condition (j):

\begin{enumerate}
\item[(j')] \emph{through every point $Q\in \B$ there exists a unique tangent $m\notin \Pi_\ell$ to $\B$}.\medskip
\end{enumerate}

Blocking sets that verify the conditions (j) and (jj) in Definition \ref{def:prodproperty} are necessarily minimal and blocking set that verify the conditions (j') and (jj) are also maximal.

\begin{Example}\label{es:3}\em Let $a, b$ be two parallel lines of $\AG(2, q)$ and let $c$ be another line not parallel to $a, b$. Set $A=b\cap c$ and $B=a\cap c$. Then $\B = (a\cup b\cup c) \setminus \{A, B\}$ is a minimal blocking set of $\AG(2, q)$ of size $3q-4$ that has the $\Pi$-property with respect to the direction $\Pi_c$. Note that in this example every line in $\Pi_c$ is a secant to $\B$, and the conditions (j') and (jj) are verified.
\end{Example}

\begin{Example}\label{es:4}\em Let $a, b$ be two parallel lines of $\AG(2, q)$, $q\ge 5$, and let $c$ be another line not parallel to $a, b$. Set $A=b\cap c$ and $B=a\cap c$. Choose a line $\ell$ through $A$ not the line $c$ or $b$ and a point $D_1$ on $\ell$ but not on $b$ or $a$. Let
\[A'=\ell\cap a, \, C_1=r\cap c, \, B_1=BD_1\cap b, \, C_2=A'B_1\cap c, \, D_2=s\cap \ell, \, B_2=BD_2\cap b,\]
with $r$ the parallel line through $D_1$ to $a$ and $s$ the parallel line through $C_2$ to $a$. Then $\B = ((a\cup b\cup c) \cup \{D_1, D_2\})\setminus \{A, B, B_1, B_2, C_1, C_2\}$ is a minimal blocking set of $\AG(2, q)$ of size $3q-6$, that has the $\Pi$-property with respect to the direction $\Pi_c$. The condition (j') is not verified, because for $D_1, D_2$ there are two tangents.
\end{Example}

Now we fix a line $\ell = r_{\infty}$ in the projective plane $\PG(2, q)$ and a point $P\in \ell$. We indicate with $\X_{P, \ell}$ the set of all minimal blocking sets in $\PG(2, q)$ with the $r_{\infty}$-property with respect to $P$, such that the unique tangent to any of these blocking sets is the line $\ell$, and with $\Y_{\Pi_P}$ the set of all minimal blocking sets in $\AG(2, q)= \PG(2, q)\setminus \ell$ with the $\Pi$-property with respect to a direction $\Pi_P$.

Next, we show that every blocking set in $\PG(2, q)$ with the $r_{\infty}$-property with respect to $P\in r_{\infty}$ determines a minimal blocking set in $\AG(2, q)= \PG(2, q)\setminus r_{\infty}$ with the $\Pi$-property with respect to a direction $\Pi_P$ and viceversa.

\begin{Proposition} \label{propo:alpha}The map $\alpha: \X_{P, \ell} \longrightarrow \Y_{\Pi_P}$ defined as follows
\[\B\in \X_{P, \ell} \longmapsto \B\setminus P \in \Y_{\Pi_P}\]
is a bijection.
\end{Proposition}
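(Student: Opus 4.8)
The plan is to prove that $\alpha$ is a bijection by exhibiting an explicit two-sided inverse, namely the map $\beta\colon \Y_{\Pi_P}\longrightarrow \X_{P,\ell}$ sending $\B'\mapsto \B'\cup\{P\}$. Since every $\B\in\X_{P,\ell}$ contains $P$ (the $r_\infty$-property of Definition \ref{Def:rproperty} requires $P\in\B$), while every $\B'\in\Y_{\Pi_P}$ lies in $\AG(2,q)=\PG(2,q)\setminus\ell$ and hence avoids $P\in\ell$, the identities $\alpha(\beta(\B'))=(\B'\cup\{P\})\setminus P=\B'$ and $\beta(\alpha(\B))=(\B\setminus P)\cup\{P\}=\B$ are immediate. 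Thus the entire content of the statement reduces to checking that the two maps are \emph{well defined}, i.e. that $\alpha$ lands in $\Y_{\Pi_P}$ and $\beta$ lands in $\X_{P,\ell}$.

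For the well-definedness of $\alpha$, let $\B\in\X_{P,\ell}$, so that $\B\cap\ell=\{P\}$ and $\ell$ is the unique tangent through $P$, every other line through $P$ being a secant. By Lemma \ref{lem:PG(2,q)}, $\B'=\B\setminus P$ is already a minimal blocking set of $\AG(2,q)$, so it remains to verify conditions (j) and (jj) of Definition \ref{def:prodproperty} relative to $\Pi_P$. For (jj), a line of $\Pi_P$ contained in $\B'$ would be the affine part of a projective line through $P$; together with $P\in\B$ this would force a whole projective line into $\B$, contradicting that a blocking set contains no line. For (j), given $Q\in\B'$ I use minimality of $\B$ to pick a projective tangent $t$ through $Q$; since $Q\notin\ell$ we have $t\neq\ell$, and since every line through $P$ other than $\ell$ is a secant while $t$ is a tangent, $t$ cannot pass through $P$. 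Hence $t$ meets $r_\infty$ at some point distinct from $P$, so its affine restriction is a tangent to $\B'$ lying in a direction different from $\Pi_P$, which is exactly (j).

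For the well-definedness of $\beta$, let $\B'\in\Y_{\Pi_P}$ and set $\B=\B'\cup\{P\}$, so $\B\cap\ell=\{P\}$. I would first check that $\B$ is a projective blocking set: the line $\ell$ and all lines through $P$ are blocked at $P$, while any line $m\neq\ell$ not through $P$ restricts to an affine line avoiding $\ell\cap\B$, hence meets $\B'$ because $\B'$ blocks $\AG(2,q)$; moreover each line through $P$ distinct from $\ell$ restricts to a line of $\Pi_P$, which meets $\B'$, making it a secant to $\B$. That $\B$ contains no line follows from (jj): a line inside $\B$ is different from $\ell$ (as $\ell\cap\B=\{P\}$), so it meets $\ell$ in a single point lying in $\B$, hence equal to $P$; thus it passes through $P$ and its affine part is a line of $\Pi_P$ inside $\B'$, which is excluded. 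These observations simultaneously show that $\ell$ is the unique tangent through $P$ and all other lines through $P$ are secants, i.e. $\B$ has the $r_\infty$-property with $r_\infty=\ell$. Finally, minimality of $\B$ follows from the tangency criterion \cite[Lemma 13.1]{JH98}: the point $P$ has tangent $\ell$, and for $Q\in\B'$ condition (j) supplies an affine tangent $m\notin\Pi_P$, whose projective closure meets $\ell$ away from $P$ and therefore avoids $P$, making it a projective tangent to $\B$ at $Q$. Hence $\B\in\X_{P,\ell}$.

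The routine parts are the incidence bookkeeping; the one genuinely delicate point, on which both directions hinge, is the equivalence between ``affine tangent not in the direction $\Pi_P$'' and ``projective tangent not through $P$''. It is precisely the $r_\infty$-property --- forcing every line through $P$ except $\ell$ to be a secant --- that guarantees a projective tangent at $Q\neq P$ cannot pass through $P$, and conversely condition (j) that guarantees the existence of such a tangent in a direction other than $\Pi_P$. Getting this correspondence exactly right, together with the role of $P$ as the single point of $\B$ on the line at infinity, is the crux of the argument.
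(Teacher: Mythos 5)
Your proposal is correct and follows essentially the same route as the paper: both verify that $\B\mapsto\B\setminus P$ lands in $\Y_{\Pi_P}$ via Lemma \ref{lem:PG(2,q)} together with conditions (j) and (jj), and both invert it by $\B'\mapsto\B'\cup\{P\}$, checked to land in $\X_{P,\ell}$. If anything, your verification of (j) --- taking an arbitrary $Q\in\B'$, using minimality of $\B$ to get a projective tangent at $Q$, and using the $r_\infty$-property to rule out that it passes through $P$ --- is stated more cleanly than the paper's, which argues via tangent lines of $\Pi_P$ and leaves the reverse well-definedness largely as ``trivial.''
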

\begin{proof} From Lemma \ref{lem:PG(2,q)}, it follows that $\B\setminus P$ is a minimal blocking sets of $\AG(2, q)$, and from the definition of $\alpha$, it follows that the mapping is injective.

Now we prove that $\B\setminus P$ has the $\Pi$-property with respect to a direction $\Pi_P$. 

Suppose that $\B\setminus P$ contains a line in $\Pi_P$, then $\B$ contains a line in $\PG(2, q)$ and $\B$ is not a blocking set. This verifies the condition (jj) of Definition \ref{def:prodproperty}. Let $\ell' \in \Pi_P$ be a line tangent to $\B\setminus P$ in $\AG(2, q)$, then $\ell'$ is not tangent to $\B$ in $\PG(2, q)$. Consequently there is a tangent $m\notin \Pi_P$ to $\B$ passing through $Q\in \ell' \cap (\B\setminus P)$. This line $m$ is also a tangent to $\B\setminus P$, since it does not contain the point $P$, and this implies the condition (j) of Definition \ref{def:prodproperty}. Finally, the claim is proved.

Now we start from a minimal blocking set $\B'\in \Y_{\Pi_P}$ of $\AG(2, q)$. It is trivial that $\B= \B'\cup P$ is a minimal blocking set in $\PG(2, q)$. It contains a line, this line must pass through $P$, and consequently there is a line in $\Pi_P$ which is contained in $\B'$, that contradicts (jj) of Definition \ref{def:prodproperty}. This implies that $\B$ is a blocking set of $\PG(2, q)$. For every point of $\B'$, there exists a tangent to $\B'$ in the affine plane that is not in $\Pi_P$ and thus the same tangent to $\B$ in the projective plane. Through $P$ there exists the unique tangent $\ell$ to $\B$ and this means that $\B$ has the 
$r_{\infty}$-property with respect to $P$, and the unique tangent to $\B$ is the line $\ell$. To finish the proof $\alpha(\B)=\B'$.
\end{proof}

According to the definition given in the projective case (Setup \ref{setup}), we say that a minimal blocking set $\B'$ of $\AG(2, q)$ is built with a $k$-construction, $2q\le k\le 3q-5$, if considered a minimal blocking set $\B$ of $\PG(2, q)$ built with a $k$-construction, chosen a point $P$ for which $\B$ has the $r_{\infty}$-property and such that the unique tangent through $P$ to $\B$ is $\ell$, we have $\alpha(\B)=\B'$. 

Observe that Example \ref{es:3} and Example \ref{es:4} are built starting from a vertexless triangle in the projective plane of the order $q$, and from a $(3q-5)$-construction, respectively.

\begin{Corollary}
Let $\B$ be a minimal blocking set in $\AG(2, q)$, $q\ge 5$, built with a $k$-construction with $2q\le k\le 3q-5$. Then $\B$ has the $\Pi$-property with respect to a direction $\Pi_\ell$.
\end{Corollary}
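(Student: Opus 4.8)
The plan is to deduce this corollary directly from the machinery already assembled, rather than re-examining the affine geometry from scratch. The key observation is that the definition of a $k$-construction in $\AG(2,q)$ was set up precisely so as to be the image under $\alpha$ of a $k$-construction in $\PG(2,q)$: by definition, if $\B'$ is a minimal affine blocking set built with a $k$-construction, then there is a minimal projective blocking set $\B$ built with a $k$-construction, a point $P\in\B$ for which $\B$ has the $r_\infty$-property, and a tangent line $\ell$ through $P$, such that $\alpha(\B)=\B'=\B\setminus P$. This reduces everything to transporting the $r_\infty$-property across the bijection $\alpha$.

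First I would invoke Lemma \ref{lem1}: since $2q\le k\le 3q-5$ and $q\ge5$, the projective blocking set $\B$ built with the $k$-construction does indeed have the $r_\infty$-property with respect to a suitable point $P\in\B$, so the hypotheses needed to form $\alpha(\B)$ are genuinely satisfied and $\B$ lies in $\X_{P,\ell}$ for the appropriate choice of $P$ and $\ell=r_\infty$. Next I would apply Proposition \ref{propo:alpha}: the map $\alpha\colon \X_{P,\ell}\to\Y_{\Pi_P}$ is a bijection, and its very codomain is the set of minimal affine blocking sets with the $\Pi$-property with respect to the direction $\Pi_P$. Hence $\B'=\alpha(\B)\in\Y_{\Pi_P}$, which is exactly the assertion that $\B'$ has the $\Pi$-property with respect to the direction $\Pi_\ell$ corresponding to $P$ (the point of $r_\infty$ through which the tangent $\ell$ passes determines the parallel class $\Pi_P=\Pi_\ell$).

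The only point requiring a little care — and the place I expect the main, if modest, obstacle to lie — is the matching of notation between the two framings: the corollary speaks of the $\Pi$-property ``with respect to a direction $\Pi_\ell$,'' whereas Proposition \ref{propo:alpha} produces the $\Pi$-property with respect to $\Pi_P$. I would reconcile these by recalling the identification $\PG(2,q)=\AG(2,q)\cup r_\infty$ under which each point $P\in r_\infty$ corresponds to a parallel class, so that $\Pi_P$ is precisely the direction determined by the line $\ell=r_\infty$ through $P$; under this correspondence $\Pi_P$ and $\Pi_\ell$ name the same parallel class, and no genuine discrepancy remains. With that identification in place, the corollary is immediate: $\B'$ built with a $k$-construction equals $\alpha(\B)$ for some $\B\in\X_{P,\ell}$, and membership in $\Y_{\Pi_P}$ is by definition the $\Pi$-property. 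This is a short deduction, essentially a bookkeeping argument that chains Lemma \ref{lem1}, the definition of an affine $k$-construction, and Proposition \ref{propo:alpha}.
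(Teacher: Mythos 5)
Your proof is correct and follows exactly the paper's route: the paper's own proof is the one-line citation of Lemma \ref{lem1} and Proposition \ref{propo:alpha}, and your proposal simply spells out that chain (the affine $k$-construction is by definition $\alpha(\B)$ for a projective $k$-construction $\B$, Lemma \ref{lem1} guarantees the $r_\infty$-property needed to place $\B$ in $\X_{P,\ell}$, and Proposition \ref{propo:alpha} lands $\alpha(\B)$ in $\Y_{\Pi_P}$). The notational reconciliation of $\Pi_P$ with $\Pi_\ell$ is handled correctly and is the only detail the paper leaves implicit.
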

\begin{proof} The proof follows from Lemma \ref{lem1} and Proposition \ref{propo:alpha}.
\end{proof}

The following key result was shown by Jamison \cite{Jamison} and, with a simpler proof, by Brouwer and Schrijver \cite{BS78}. See also \cite[Corollary 13.46]{JH98}.
\begin{Lemma} \label{lem:[7]}
Every blocking set of the lines in $\AG(2, q)$ has a size greater than or equal to $2q-1$.
\end{Lemma}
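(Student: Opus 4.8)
The plan is to prove the sharp lower bound $|\B|\ge 2q-1$ for any affine blocking set in $\AG(2,q)$ by embedding the problem into the polynomial method over the finite field $\mathbb{F}_q$. First I would fix affine coordinates so that the points of $\AG(2,q)$ are identified with the $q^2$ pairs $(x,y)\in \mathbb{F}_q^2$, and the lines are the affine lines. Suppose, for contradiction, that $\B$ is a blocking set with $|\B|\le 2q-2$. The key idea is to associate to $\B$ a polynomial that detects the blocking condition and then derive a degree obstruction.

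The main step is the construction of a suitable polynomial identity. I would consider the polynomial
\[
F(X,Y)=\prod_{(a,b)\in \B}\bigl(X-a\bigr)\quad\text{(schematically)},
\]
but more precisely the Jamison/Brouwer--Schrijver approach works with a \emph{covering} reformulation: a blocking set in $\AG(2,q)$ corresponds, after choosing a direction at infinity, to a subset of $\mathbb{F}_q^2$ that meets every affine line, which in turn can be translated into the statement that a certain set of directions is covered. The cleanest route is the Brouwer--Schrijver argument: one encodes the complement and uses the fact that the number of nonzero common values forces, via Rédei-type counting or a direct character-sum computation over $\mathbb{F}_q$, that at least $2(q-1)$ points beyond a base point are needed. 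Concretely, I would count incidences by summing over all $q+1$ parallel classes (directions): each of the $q+1$ directions partitions $\AG(2,q)$ into $q$ parallel lines, each of which must contain a point of $\B$, giving a system of covering constraints.

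The cleanest rigorous path, and the one I would actually write, is the polynomial/character-sum argument of Brouwer and Schrijver. Identify $\AG(2,q)$ with $\mathbb{F}_{q^2}$, so that affine lines become cosets of the $1$-dimensional $\mathbb{F}_q$-subspaces. Then $\B$ meeting every line means that for every such coset $\ell$ we have $\B\cap\ell\neq\emptyset$. Using the additive characters of $\mathbb{F}_{q^2}$ and the orthogonality relations, one sets up a weighting on the points of $\B$ and shows that a nontrivial linear dependence among the $|\B|$ characteristic vectors is forced unless $|\B|\ge 2q-1$; equivalently, one exhibits $2q-1$ as the minimal degree at which a certain family of polynomials can simultaneously vanish on all lines but not identically. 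The degree bound $2q-1$ emerges because the relevant polynomials live in $\mathbb{F}_q[X]/(X^q-X)$ in each of two variables, contributing $q-1$ each, plus one, and a blocking set smaller than this forces a contradiction with the nonvanishing of a Vandermonde-type determinant.

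The hard part will be setting up the polynomial identity so that the degree count yields exactly $2q-1$ rather than a weaker bound such as $q$ or $q+\sqrt q$; the subtlety is that the affine (as opposed to projective) setting removes one degree of freedom, and one must be careful that $\B$ is allowed to \emph{contain} full lines (as the excerpt notes after Definition~\ref{def1}), so the argument cannot use tangency or minimality and must rely purely on the covering condition. Since this is precisely the Jamison, Brouwer--Schrijver theorem, I would in practice not reprove it from scratch but cite \cite{Jamison, BS78} and \cite[Corollary 13.46]{JH98}, and only sketch the character-sum reduction above to make the bound transparent to the reader.
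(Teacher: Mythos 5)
The paper offers no proof of this lemma at all---it is stated as the known Jamison/Brouwer--Schrijver theorem with citations to \cite{Jamison}, \cite{BS78} and \cite[Corollary 13.46]{JH98}---and your proposal ultimately lands in the same place by deferring to those same references, so the two treatments coincide. I would only caution that the sketch you interpolate drifts from the actual Brouwer--Schrijver argument, which does not use character sums, R\'edei-type counting, or a Vandermonde determinant: one translates so that $0\in\B$, observes that $\B\setminus\{0\}$ blocks every line $\{x:\langle y,x\rangle=1\}$, so $F(y)=\prod_{b\in\B\setminus\{0\}}(\langle y,b\rangle-1)$ vanishes on $\mathbb{F}_q^2\setminus\{0\}$ but not at $0$, and then invokes the elementary lemma that such a polynomial has degree at least $2(q-1)$; since you cite rather than prove, this inaccuracy is harmless here, but the sketch as written would not compile into a correct proof.
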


We are now in the position to prove our main result in the section.
\begin{proof}[Proof of Theorem \ref{thm:no}] Let $\B$ be a blocking set with the $r_{\infty}$-property with respect to a point $P$ of size $k\le 2q-1$. Since there exists a tangent through $P$ to $\B$, any blocking set $\B'$ with $\B'\subset\B$ must contain $P$. Therefore there exists a minimal blocking set $\B'\subset\B$ with the $r_{\infty}$-property with respect to the same point $P$. From Proposition \ref{propo:alpha}, there exists a blocking set $\B' \setminus P$ in $\AG(2, q)$ with size $k'< 2q-1$, which contradicts Lemma \ref{lem:[7]}.
\end{proof}

\begin{Definition} A blocking set $\B$ in $\AG(2, q)$ has the $\Pi$-strong property with respect to a direction $\Pi_\ell$ if the conditions \textup{(j')} and \textup{(jj)} are verified.
\end{Definition}

Let us denote with $\Z_{P, \ell}$ the set of all blocking semiovals in $\PG(2, q)$ which contain the point $P$ and such that $\ell$ is the unique tangent to $\B$ through $P$, and with $\W_{P, \ell}$ the set of all blocking semiovals in $\AG(2, q)= \PG(2, q)\setminus \ell$ with the $\Pi$-strong property with respect to a direction $\Pi_P$.

The next result shows that every $\B\in \Z_{P, \ell}$ determines a blocking semioval in $\AG(2, q)= \PG(2, q)\setminus \ell$ with the $\Pi$-property with respect to a direction $\Pi_P$.

\begin{Proposition} Let $\alpha: \X_{P, \ell} \longrightarrow \Y_{\Pi_P}$ be the bijection defined as follows:
\[\B\in \X_{P, \ell} \longmapsto \B\setminus P \in \Y_{\Pi_P}.\]
Then $\alpha(\Z_{P, \ell}) = \W_{P, \ell}$.
\end{Proposition}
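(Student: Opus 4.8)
The plan is to leverage the bijection $\alpha$ of Proposition~\ref{propo:alpha} and reduce the claim to a pointwise translation of the semioval condition. First I would record the two containments that make the statement meaningful, namely $\Z_{P,\ell}\subseteq\X_{P,\ell}$ and $\W_{P,\ell}\subseteq\Y_{\Pi_P}$, so that $\alpha$ is actually defined on $\Z_{P,\ell}$ and its image lands in $\Y_{\Pi_P}$. The first holds because, by Remark~\ref{Rem:blocksemiovals}, a blocking semioval is a minimal blocking set satisfying the $r_\infty$-property at each of its points; for $\B\in\Z_{P,\ell}$ the tangent through $P$ is $\ell$ by definition, while every other line through $P$ already contains $P\in\B$ and meets $\B$ again, hence is a secant, so $\B\in\X_{P,\ell}$. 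The second holds because the $\Pi$-strong property, being conditions (j$'$) and (jj), implies (j) and (jj), whence membership in $\Y_{\Pi_P}$. Since $\alpha$ is a bijection on the ambient sets, it then suffices to prove, for $\B\in\X_{P,\ell}$, the equivalence $\B\in\Z_{P,\ell}\iff\B\setminus P\in\W_{P,\ell}$.

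For the inclusion $\alpha(\Z_{P,\ell})\subseteq\W_{P,\ell}$, I would take $\B\in\Z_{P,\ell}$ and set $\B'=\B\setminus P$; Lemma~\ref{lem:PG(2,q)} already provides that $\B'$ is a minimal affine blocking set, so the crux is verifying (j$'$). For a point $Q\in\B'$ (hence $Q\neq P$), let $m_Q$ be the unique tangent to $\B$ at $Q$ supplied by the semioval property. I would argue that $m_Q\neq\ell$, since otherwise $\ell$ would be tangent at both $P$ and $Q$, which is impossible because $\ell\cap\B=\{P\}$; and that $m_Q$ does not pass through $P$, since otherwise $m_Q$ would be a line through $P$ distinct from $\ell$, hence a secant by the $r_\infty$-property, contradicting that it is a tangent. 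Consequently $m_Q$ survives in $\AG(2,q)$ as a tangent to $\B'$ at $Q$ lying outside $\Pi_P$, and its uniqueness follows at once from the uniqueness of the projective tangent at $Q$ (any affine tangent at $Q$ outside $\Pi_P$ reopens to a projective tangent at $Q$). Condition (jj) is immediate: a line of $\Pi_P$ contained in $\B'$ would close up, together with $P\in\B$, to a full projective line inside $\B$, contradicting that $\B$ is a blocking set.

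For the reverse inclusion, I would take $\B'\in\W_{P,\ell}$ and put $\B=\B'\cup\{P\}$; by Proposition~\ref{propo:alpha} we have $\B\in\X_{P,\ell}$, so $\B$ is a minimal blocking set with $\ell$ the unique tangent through $P$ and every other line through $P$ secant. It remains only to see that $\B$ is a semioval, i.e.\ that each $Q\neq P$ in $\B$ has a unique projective tangent. A tangent at $Q$ cannot be $\ell$ (which meets $\B$ only in $P$) nor a line through $P$ distinct from $\ell$ (these are secants), so it must lie outside $\Pi_P$; such lines correspond exactly to the affine lines not in $\Pi_P$, and by (j$'$) there is precisely one of them tangent to $\B'$ at $Q$. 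Hence the projective tangent at $Q$ is unique, $\B$ is a blocking semioval containing $P$ with $\ell$ its tangent at $P$, and therefore $\B\in\Z_{P,\ell}$.

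The main obstacle here is conceptual rather than computational: one must keep straight the dictionary between lines through $P$ in $\PG(2,q)$ and the parallel class $\Pi_P$ in $\AG(2,q)$, and recognize that the disappearance of the single projective tangent $\ell$ at $P$ upon passing to the affine plane is precisely the reason that the affine avatar of ``semioval'' is the weaker-looking ``unique tangent outside $\Pi_P$'' embodied in the $\Pi$-strong property. Once this correspondence is fixed, each of the four verifications collapses to the single mechanism by which the $r_\infty$-property forces all lines through $P$ other than $\ell$ to be secants.
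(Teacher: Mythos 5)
Your proposal is correct and follows essentially the same route as the paper: both reduce the statement to Proposition \ref{propo:alpha} and then translate the semioval condition into condition (j$'$) via the observation that the unique tangent at a point $Q\neq P$ cannot pass through $P$ (all such lines other than $\ell$ being secants by the $r_\infty$-property), and conversely. Your write-up merely supplies more of the bookkeeping (the containments $\Z_{P,\ell}\subseteq\X_{P,\ell}$, $\W_{P,\ell}\subseteq\Y_{\Pi_P}$, and the exact line-correspondence) that the paper leaves implicit.
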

\begin{proof} Let $\B$ be a blocking semioval. From Proposition \ref{propo:alpha}, it is sufficient to prove condition (j'). For every $Q\in \B$ ($Q\neq P$) there exists a unique tangent $m$ to $\B$. This tangent does not contain $P$, then $m\notin \Pi_\ell$ in the affine plane. Starting from a minimal blocking set $\B'$ in $\AG(2, q)$ that verifies the conditions (j') and (jj), then for every point $Q\in \B'$ the unique tangent to $\B'$ is also the unique tangent to $\B=\B'\cup P$ in $\PG(2, q)$. At the end for $P$ the unique tangent is $\ell$. This proves that $\B=\B'\cup P$ in $\PG(2, q)$ is a blocking semioval.
\end{proof}

\section{Conclusions and Open questions}\label{secPG:5}

In the previous sections we have discussed the existence of certain blocking sets with the $r_\infty$-property in $\PG(2, q)$ having a given size.\medskip

Let us consider the case $q=4$.

In $\PG(2, 4)$ we have only minimal blocking sets of size $k\in\{7, 8, 9\}$. For $k=9$, there are the vertexless triangles which have the $r_{\infty}$-property. If $k=7$, there are no blocking sets with the $r_{\infty}$-property (Theorem \ref{thm:no}). 

The next example, provided by one of the referees, shows that there exists a blocking set with the $r_\infty$-property of size $3q - 4$ in $\PG(2,q)$ for $q=4$.

Recall that a proper subplane $\B$ of a projective plane $\PG(2, q)$ is called a \textit{Baer subplane} if each line of $\PG(2, q)$ contains a point in $\B$ and, dually, each point of $\PG(2, q)$ is incident with a line in $\B$.

\begin{Example}\label{es:5} \em Let $R$ and $T$ be two points of a Baer subplane $\B$ in $\PG(2, 4)$. Let $t$ be one of the two tangent lines to $\B$ at $T$ and $r_1, r_2$ be the two tangent lines to $\B$ at $R$. Finally, for $i = 1,2$, denote the point $t\cap  r_i$ by $T_i$. Then it is easy to check that $(\B\setminus \{R\})\cup\{T_1, T_2\}$ is a minimal blocking set of cardinality $8 = 3q -4$ and it has the $r_\infty$-property at $T$. Indeed, the unique tangent is the other tangent to $\B$ at $T$.
\end{Example}

Let $q\ge5$. By Theorem \ref{thm:no}, for any $1\le k\le 2q-1$, no blocking set with the $r_\infty$-property of size $k$ in $\PG(2,q)$ exists. Whereas, by Theorem \ref{thm:existence}, there exists a blocking set $\B$ with the $r_\infty$-property of size $k$ in $\PG(2,q)$, for all $2q\le k\le 3q-5$ and the size $k=3(q-1)$ is achieved if $\B$ is a vertexless triangle. Furthermore, by Theorem \ref{thm:size}, if $q\ge5$, there exists such a set $\B$, in fact a blocking semioval, of size $k=3q-4$. 

These considerations lead to the following open question.\smallskip

\begin{Open question} Are there blocking sets with the $r_{\infty}$-property of size $k>3(q-1)$ in $\PG(2,q)$ with $q\ge5$?
\end{Open question}\smallskip

By Theorem \ref{thm:size}(a), we know that $|\B|\le3(q-1)$ for $q>5$ and any blocking semioval $\B$ in $\PG(2,q)$ with $x_{q-1}\neq 0$. For instance any vertexless triangle which is a blocking semioval of size $3(q-1)$ satisfy the property $x_{q-1}\neq 0$. For $2q+2\le k\le 3q-5$ we do not know whether there exist blocking semiovals of size $k$. By Lemma \ref{lem1}, for $2q\le k\le 3q-5$, we only know that any blocking set built with a $k$-construction verifies the $r_\infty$-property but is not a semioval.

\begin{Open question} Determine those integers $2q+2\le k\le 3q-5$, $q>5$, for which there exists a blocking semioval of size $k$ in $\PG(2,q)$.
\end{Open question}

The case $k=3q-4$, with $q\ge5$, was addressed by Dover and Suetake \cite{JMD00, CS002}.\bigskip

{\bf Note.} \emph{This version of the article differs from the earlier one thanks to the helpful comments provided by Prof. Jeremy Dover. We are grateful for his attention to the manuscript. The revisions affect the final part of the introduction, the statement of Theorem \ref{thm:size}(a), and the concluding part of Section \ref{secPG:5}.}\bigskip

\textbf{Acknowledgments.} The authors thank the anonymous referees for their careful reading and helpful suggestions that allowed to improve the quality of the paper. The authors acknowledge support of the GNSAGA of INdAM (Italy). \\
A. Ficarra was partly supported by the Grant JDC2023-051705-I funded by
MICIU/AEI/10.13039/501100011033 and by the FSE+.


\begin{thebibliography}{99}
	\bibitem{BFKMP14} G. Bartoli, G. Faina, G. Kiss, S. Marcugini, F. Pambianco, \textit{2-semiarcs in $\PG(2, q) $, $ q\leq 13$}. Ars Combinatoria 117 (2014): 435-462.
	
	\bibitem{Bt00} L. M. Batten, \textit{Determining set}, Australas. J. Combin. 22 (2000),  167--176.
		
	\bibitem{BD01} L. M. Batten, J. M. Dover, \textit{Blocking semiovals of type $(1, m + 1, n + 1)$}, SIAM J. Discrete Math. 14 (2001), 446--457.
	
	\bibitem{BP18} D. Bartoli, F. Pavese, \textit{Blocking semiovals in $\PG(2,q^2)$, $q$ odd, admitting $\textup{PGL}(2,q)$ as an automorphism group}, Finite Fields and Their Applications 54 (2018): 315-334.
	
	\bibitem{FB73} F. Buekenhout, \textit{Characterizations of semi quadrics. A survey}, Colloquio Internazionale sulle Teorie Combinatorie (Roma, 1973), Tomo I, Atti dei Convegni Lincei, Accad. Naz. Lincei, Rome, 17(1976), 393--421.
	
	\bibitem{BCF24} A. Botteldoorn, K. Coolsaet, V. Fack, \textit{Classification of minimal blocking sets in $\PG (2, 9)$}. Journal of Geometry 115.1 (2024).
	
    \bibitem{BS78} A. E. Brouwer, A. Schrijver, \textit{The blocking number of an affine space}. J. Combin. Theory Ser. A 24:251–253, 1978.
    
    \bibitem{ABruen71} A. A. Bruen, \textit{Blocking sets in finite projective planes}, SIAM J. Appl. Math., 21(1971), 380-392.
    
	\bibitem{JMD00} J. M. Dover, \textit{A lower bound on blocking semiovals}, European J. Combin. 21 (2000), 571--577.
	
	\bibitem{JMD002} J. M. Dover, \textit{Semiovals with large collinear subsets}, J. Geom. 69 (2000), 58--67.
	
	\bibitem{JM12} J. M. Dover, \textit{Some new results on small blocking semiovals}, Australas. J Comb. 52 (2012): 269-280.
	
	\bibitem{JMDMW16} J. M. Dover, K. E. Mellinger, K. L. Wantz, \textit{A minimum blocking semioval in $\PG (2, 9)$}. Journal of Geometry 107 (2016): 119-123.
	
	\bibitem{JMD23} J. M. Dover, \textit{On the minimum blocking semioval in $\PG (2, 11)$}. Journal of Geometry 114.2 (2023): 24.
	
	\bibitem{DMW13} J. M. Dover, K. E. Mellinger, K. L. Wantz, \textit{Blocking semiovals containing conics}. Advances in Geometry 13.1 (2013): 29-40.
	
	\bibitem{JH98} J. W. P. Hirschfeld, \textit{Projective Geometries over finite fields}, Claredon Press Oxford, 1998, 2nd Edition.
	
	\bibitem{XH1970} X. Hubaut, \textit{Limitation du nombre de points d’un $(k, n)$--arc regulier d’un plan projectif fini}, Atti Accad. Naz. Lincei Rend. Cl. Sci. Fis. Mat. Natur. (8) 48(1970), 490--493.
	
	\bibitem{Jamison} R. E. Jamison, \textit{Covering finite fields with cosets of subspaces}. J. Combin. Theory Ser. A 22:253–266, 1977.
	
	\bibitem{GK} G. Kiss, \textit{Survey on semiovals}, Contributions to Discrete Mathematics, 3(1) (2008), 81-95.
	
	\bibitem{GK08} G. Kiss, S. Marcugini, F. Pambianco, \textit{Semiovals in projective planes of small order}. Eleventh International Workshop on Algebraic and Combinatorial Coding Theory ACCT, 2008, June 16-22, 2008, Pamporovo, Bulgaria. 2008.
	
	\bibitem{IM91} S. Innamorati, A. Maturo, \textit{On irreducible blocking sets in projective planes}. Ratio Math., 2:151--155, (1991).
	
	\bibitem{AP2018} A. Pascoe, \textit{Affine and Projective Planes} (2018). MSU Graduate Theses. 3233. available at \url{https://bearworks.missouristate.edu/theses/3233}
	
	\bibitem{PP02} P. Polito, O. Polverino, \textit{Linear blocking sets in $\PG(2, q^4)$}, Australas. J. Comb., 26 (2002), 41-48.
	
	\bibitem{PS02} O. Polverino, L. Storme, \textit{Small minimal blocking sets in $\PG(2, q^3)$}, European J. Combin., 23 (2002), 83-92.
	
	\bibitem{CS00} C. Suetake, \textit{Some blocking semiovals which admit a homology group}, European J. Combin. 21, (2000), No. 7,  967--972.
	
	\bibitem{CS002} C. Suetake, \textit{Two families of blocking semiovals}, European J. Combin. 21(2000), No.7, 973-980.
	
	\bibitem{JT74} J. A. Thas, \textit{On semi ovals and semi ovoids}, Geom. Dedicata 3 (1974), 229--231.
\end{thebibliography}
\end{document}